\renewcommand{\llbracket}{\lbrack \negsp[3] \lbrack}  
\renewcommand{\rrbracket}{\rbrack \negsp[3] \rbrack}  
\numberwithin{equation}{section} \numberwithin{theorem}{section}
\begin{document}
  \linenumbers

\title[Multiplication operators on the energy space]
  {Multiplication operators on the energy space}

\author{Palle E. T. Jorgensen}\email{jorgen@math.uiowa.edu}
\address{University of Iowa, Iowa City, IA 52246-1419 USA}

\author{Erin P. J. Pearse}\email{ep@ou.edu}
\address{University of Oklahoma, Norman, OK 73019-0315 USA}

\thanks{The work of PETJ was partially supported by NSF grant DMS-0457581. The work of EPJP was partially supported by the University of Iowa Department of Mathematics NSF VIGRE grant DMS-0602242.}

\begin{abstract}
  This paper studies the ``energy space'' $\mathcal{H}_{\mathcal{E}}$ (the Hilbert space of functions of finite energy, aka the Dirichlet-finite functions) on an infinite network (weighted connected graph), from the point of view of the multiplication operators $M_f$ associated to functions $f$ on the network. We show that the multiplication operators $M_f$ are not Hermitian unless $f$ is constant, and compute the adjoint $M_f^\star$ in terms of a reproducing kernel for $\mathcal{H}_{\mathcal{E}}$.  
  A characterization of the bounded multiplication operators is given in terms of positive semidefinite functions, and we give some conditions on $f$ which ensure $M_f$ is bounded. Examples show that it is not sufficient that $f$ be bounded or have finite energy. Conditions for the boundedness of $M_f$ are also expressed in terms of the behavior of the simple random walk on the network. 
  
  We also consider the bounded elements of $\mathcal{H}_{\mathcal{E}}$ and the (possibly unbounded) multiplication operators corresponding to them. In a previous paper, the authors used functional integration to construct a type of boundary for infinite networks. The boundary is described here in terms of a certain subalgebra of these multiplication operators, and is shown to embed into the Gel'fand space of that subalgebra. In the case when the only harmonic functions of finite energy are constant, we show that the Gel'fand space is the 1-point compactification of the underlying network. 
\end{abstract}

  \keywords{Multiplication operator, operator algebra, bounded operator, Dirichlet form, graph energy, discrete potential theory, graph Laplacian, weighted graph, trees, spectral graph theory, resistance network, effective resistance, boundary theory, boundary representation, harmonic analysis, spectrum, Banach algebra, Gel'fand space, Hilbert space, orthogonality, unbounded linear operator, reproducing kernel.}

  \subjclass[2010]{
    Primary:
    05C50, 
    05C75, 
    42A82, 
    46E22, 
    47B25, 
    47B32, 
    Secondary:
    82C22, 
    82C41. 
    }

  \date{\bf\today.}

\maketitle

\setcounter{tocdepth}{1}
{\small \tableofcontents}

\allowdisplaybreaks


\section{Introduction}
\label{sec:introduction}

In this paper, we study the functions on a network, and the corresponding algebra of multiplication operators. More precisely, we consider the Hilbert space of finite-energy functions on a network, how the multiplication operators act on them, and under what conditions these operators are bounded, Hermitian, or have other properties of interest. In Theorem~\ref{thm:M_f-bounded-iff-s_f-semidefinite}, we show that the multiplication operator corresponding to a function $f$ is bounded on \HE with $\|M\| \leq b$ if and only if 
  \linenopax
  \begin{align}\label{eqn:s_f-semidefinite-preview}
    s_f(x,y) := \left(b^2 - f(x) \cj{f(y)} \right) \la v_x,v_y\ra_\energy
  \end{align}
is a positive semidefinite function on $X \times X$ (see \eqref{eqn:psd} for the definition of a positive semidefinite function), where $\{v_x\}_{x \in X}$ is the reproducing kernel for the Hilbert space of finite-energy functions discussed in \cite{DGG}. In Theorem~\ref{thm:d_x-is-bounded}, we show that multiplication by a point mass gives a bounded operator, and that the bound is given in terms of the conductance of the network at $x$ and the resistance distance to $x$. (While one would expect such boundedness, it is a bit surprising that the proof is not trivial.) In Theorem~\ref{thm:boundedness-criterion} we give an equivalent condition to  \eqref{eqn:s_f-semidefinite-preview} which is expressed in terms of an explicit matrix computation, and in Theorem~\ref{thm:boundedness-sufficiency} we give a sufficient condition for \eqref{eqn:s_f-semidefinite-preview} to hold which is even easier to check.

Next, we study the bounded functions of finite energy, and the corresponding multiplication operators. In Theorem~\ref{thm:Gelfand-space-is-bdG}, we show that the boundary $\bd \Graph$ developed in \cite{bdG} (see also the expository paper \cite{RBIN}) embeds into the Gel'fand space (that is, the spectrum of a Banach algebra realized as a topological space) of the algebra of bounded harmonic functions of finite energy. In Theorem~\ref{thm:Harm=0-means-Gelfand-is-1pt}, we then see that the Gel'fand space is a 1-point compactification of \Graph (and the unitalization of the corresponding $C^\ad$-algebra) if the only harmonic functions of finite energy are constants.

While our main results in this paper concern infinite weighted graphs, such as arise in the study of Markov processes \cite{Woess,WoessII,Lyons,LevinPeresWilmer}, geometric group theory \cite{Woess}, percolation \cite{Lyons}, discrete harmonic analysis \cite{Woess,WoessII,Soardi94}, and electrical networks \cite{Lyons,LevinPeresWilmer,DoSn84,Soardi94},%
  \footnote{This is by no means a complete catalogue of the literature, but each of the references listed in this sentence provides an excellent and extensive list of further reading.}
  we will need to develop some results on matrix-order and its use in the study of operators on (infinite-dimensional) separable Hilbert spaces. Aside from their applications, we hope that our separate matrix/operator results may be of independent interest. See \cite{RANR} for relations to Markov processes and \cite{SRAMO} for relations to matrix representations of operators. To make our paper accessible to separate audiences, we have included details from one area which perhaps may not be familiar to readers from the other. 
      The literature dealing with analysis on infinite graphs is vast, and we do not attempt to cite all the subareas. The monograph \cite{OTERN} includes a more systematic treatment, but still slanted towards spectral theory and operators in Hilbert space. It also contains a more complete bibliography. Stressing the operator theory/algebra, and reproducing kernels, there are the papers \cite{AMV09, KaVi06, SVY04, Cho08}; random walk models \cite{Die10, BoGa08, WoessII, SW09} and references cited there; and quantum theory \cite{Sal10, SSD09, Del09, MaSe09}.  

\section{Basic terms and previous results}
\label{sec:Basic-terms-and-previous-results}

We now proceed to introduce the key notions used throughout this paper: resistance networks, the energy form \energy, the Laplace operator \Lap, and their elementary properties. 

\begin{defn}\label{def:ERN}
  A \emph{(resistance) network} is a connected graph $(\Graph,\cond)$, where \Graph is a graph with vertex set \verts, and \cond is the \emph{conductance function} which defines adjacency by $x \nbr y$ iff $c_{xy}>0$, for $x,y \in \verts $. We assume $\cond_{xy} = \cond_{yx} \in [0,\iy)$, and write $\cond(x) := \sum_{y \nbr x} \cond_{xy}$. We require $\cond(x) < \iy$, but $\cond(x)$ need not be a bounded function on \Graph. The notation \cond may be used to indicate the multiplication operator $(\cond v)(x) := \cond(x) v(x)$.
\end{defn}

In this definition, connected means simply that for any $x,y \in \verts $, there is a finite sequence $\{x_i\}_{i=0}^n$ with $x=x_0$, $y=x_n$, and $\cond_{x_{i-1} x_i} > 0$, $i=1,\dots,n$.  
We may assume there is at most one edge from $x$ to $y$, as two conductors $\cond^1_{xy}$ and $\cond^2_{xy}$ connected in parallel can be replaced by a single conductor with conductance $\cond_{xy} = \cond^1_{xy} + \cond^2_{xy}$. Also, we assume $\cond_{xx}=0$ so that no vertex has a loop. 

Since the edge data of $(\Graph,\cond)$ is carried by the conductance function, we will henceforth simplify notation and write $x \in \Graph$ to indicate that $x$ is a vertex. For any network, one can fix a reference vertex, which we shall denote by $o$ (for ``origin''). It will always be apparent that our calculations depend in no way on the choice of $o$.

\begin{defn}\label{def:graph-laplacian}
  The \emph{Laplacian} on \Graph is the linear difference operator which acts on a function $v:\Graph \to \bR$ by
  \linenopax
  \begin{equation}\label{eqn:Lap}
    (\Lap v)(x) :
    = \sum_{y \nbr x} \cond_{xy}(v(x)-v(y)).
  \end{equation}
  A function $v:\Graph \to \bR$ is \emph{harmonic} iff $\Lap v(x)=0$ for each $x \in \Graph$. 
\end{defn}

We have adopted the physics convention (so that the spectrum is nonnegative) and thus our Laplacian is the negative of the one commonly found in the PDE literature. The network Laplacian \eqref{eqn:Lap} should not be confused with the stochastically renormalized Laplace operator $\cond^{-1} \Lap$ which appears in the probability literature, or with the spectrally renormalized Laplace operator $\cond^{-1/2} \Lap \cond^{-1/2}$ which appears in the literature on spectral graph theory (e.g., \cite{Chung01}).

\begin{defn}\label{def:graph-energy}
  The \emph{energy} of functions $u,v:\Graph \to \bC$ is given by the (closed, bilinear) Dirichlet form
  \linenopax
  \begin{align}\label{eqn:def:energy-form}
    \energy(u,v)
    := \frac12 \sum_{x,y \in \Graph} \cond_{xy}(\cj{u}(x)-\cj{u}(y))(v(x)-v(y)),
  \end{align}
  with the energy of $u$ given by $\energy(u) := \energy(u,u)$.
  The \emph{domain} of the energy form is
  \linenopax
  \begin{equation}\label{eqn:def:energy-domain}
    \dom \energy = \{u:\Graph \to \bC \suth \energy(u)<\iy\}.
  \end{equation}
\end{defn}
Since $\cond_{xy}=\cond_{yx}$ and $\cond_{xy}=0$ for nonadjacent vertices, the initial factor of $\frac12$ in \eqref{eqn:def:energy-form} implies there is exactly one term in the sum for each edge in the network. 

\begin{remark}\label{rem:sum-convergence}
  To remove any ambiguity about the precise sense in which \eqref{eqn:def:energy-form} converges, note that $\energy(u)$ is a sum of nonnegative terms and hence converges iff it converges absolutely. Since the Schwarz inequality gives $\energy(u,v)^2 \leq \energy(u)\energy(v)$, it is clear that the sum in \eqref{eqn:def:energy-form} is well-defined whenever $u,v \in \dom \energy$.
\end{remark}

\subsection{The energy space \HE} 
\label{sec:The-energy-space}

The energy form \energy is sesquilinear and conjugate symmetric on $\dom \energy$ and would be an inner product if it were positive definite.   

\begin{defn}\label{def:H_energy}\label{def:The-energy-Hilbert-space}
  Let \one denote the constant function with value 1 and recall that $\ker \energy = \bC \one$. Then $\HE := \dom \energy / \bC \one$ is a Hilbert space with inner product and corresponding norm given by
  \linenopax
  \begin{equation}\label{eqn:energy-inner-product}
    \la u, v \ra_\energy := \energy(u,v)
    \q\text{and}\q
    \|u\|_\energy := \energy(u,u)^{1/2}.
  \end{equation}
  We call \HE the \emph{energy (Hilbert) space}. 
\end{defn}

\begin{defn}\label{def:vx}\label{def:energy-kernel}
  Let $v_x$ be defined to be the unique element of \HE for which
  \linenopax
  \begin{equation}\label{eqn:v_x}
    \la v_x, u\ra_\energy = u(x)-u(o),
    \qq \text{for every } u \in \HE.
  \end{equation}
  The collection $\{v_x\}_{x \in \Graph}$ forms a reproducing kernel for \HE (\cite[Cor.~2.7]{DGG}); we call it the \emph{energy kernel} and \eqref{eqn:v_x} shows its span is dense in \HE. 
\end{defn}

Note that $v_o$ corresponds to a constant function, since $\la v_o, u\ra_\energy = 0$ for every $u \in \HE$. Therefore, $v_o$ may often be safely ignored or omitted during calculations. 

\begin{defn}\label{def:dipole}
  A \emph{dipole} is any $v \in \HE$ satisfying the pointwise identity $\Lap v = \gd_x - \gd_y$ for some vertices $x,y \in \Graph$. One can check that $\Lap v_x = \gd_x - \gd_o$; cf. \cite[Lemma~2.13]{DGG}.
\end{defn}

\begin{defn}\label{def:Fin}
  For $v \in \HE$, one says that $v$ has \emph{finite support} iff there is a finite set $F \ci \Graph$ for which $v(x) = k \in \bC$ for all $x \notin F$, i.e., the set of functions of finite support in \HE is 
  \linenopax
  \begin{equation}\label{eqn:span(dx)}
    \spn\{\gd_x\} = \{u \in \dom \energy \suth u(x)=k \text{ for some $k$, for all but finitely many } x \in \Graph\},
  \end{equation}
  where $\gd_x$ is the Dirac mass at $x$, i.e., the element of \HE containing the characteristic function of the singleton $\{x\}$. It is immediate from \eqref{eqn:def:energy-form} that $\energy(\gd_x) = \cond(x)$, whence $\gd_x \in \HE$.
  Define \Fin to be the closure of $\spn\{\gd_x\}$ with respect to \energy. 
\end{defn}

\begin{defn}\label{def:Harm}
  The set of harmonic functions of finite energy is denoted
  \linenopax
  \begin{equation}\label{eqn:Harm}
    \Harm := \{v \in \HE \suth \Lap v(x) = 0, \text{ for all } x \in \Graph\}.
  \end{equation}
\end{defn}

\begin{lemma}[{\cite[2.11]{DGG}}]
  \label{thm:<delta_x,v>=Lapv(x)}
  For any $x \in \Graph$, one has $\la \gd_x, u \ra_\energy = \Lap u(x)$.
\end{lemma}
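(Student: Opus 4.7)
The plan is to unwind the defining bilinear expression for $\energy(\gd_x, u)$ directly from \eqref{eqn:def:energy-form} and observe the dramatic cancellation that occurs because $\gd_x$ is supported at a single vertex.

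Concretely, I would first write
\[
\la \gd_x, u\ra_\energy
 = \tfrac12 \sum_{y,z \in \Graph} \cond_{yz}\bigl(\cj{\gd_x}(y) - \cj{\gd_x}(z)\bigr)\bigl(u(y) - u(z)\bigr),
\]
justifying absolute convergence from Remark~\ref{rem:sum-convergence} together with the fact that $\energy(\gd_x) = \cond(x) < \iy$, so $\gd_x \in \HE$ and Cauchy--Schwarz applies. Next, I would note that the factor $\cj{\gd_x}(y) - \cj{\gd_x}(z)$ vanishes unless exactly one of the two vertices equals $x$: it is $+1$ when $y = x$, $z \neq x$, and $-1$ when $z = x$, $y \neq x$. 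This collapses the double sum to a pair of single sums over the neighbors of $x$.

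Using the symmetry $\cond_{yx} = \cond_{xy}$ to combine these two sums, the factor of $\tfrac12$ disappears and one is left with
\[
\la \gd_x, u\ra_\energy = \sum_{y \nbr x} \cond_{xy}\bigl(u(x) - u(y)\bigr),
\]
which is precisely $\Lap u(x)$ by \eqref{eqn:Lap}.

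There is no real obstacle here; the only mild subtlety is to remember that $\gd_x$ denotes an equivalence class in $\HE = \dom\energy / \bC\one$, so that the inner product is independent of the choice of representative — this is automatic because constants contribute nothing to differences $u(y) - u(z)$. In particular, no reference vertex $o$ enters the formula, consistent with the fact that $\Lap u(x)$ is determined by the pointwise values of any representative of $u$.
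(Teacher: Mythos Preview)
Your argument is correct and is the standard direct computation. Note, however, that the present paper does not actually prove this lemma: it is quoted from \cite[2.11]{DGG} without proof, so there is no in-paper argument to compare against. Your unwinding of \eqref{eqn:def:energy-form} via the support of $\gd_x$ is exactly the expected route and matches what one finds in the cited reference.
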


The following result follows easily from Lemma~\ref{thm:<delta_x,v>=Lapv(x)}; cf.~\cite[Thm.~2.15]{DGG}.

\begin{theorem}[Royden decomposition]
  \label{thm:HE=Fin+Harm}
  $\HE = \Fin \oplus \Harm$.
\end{theorem}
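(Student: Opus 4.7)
The plan is to identify $\Harm$ with the orthogonal complement of $\Fin$ inside $\HE$ and then invoke the standard orthogonal-decomposition theorem for Hilbert spaces. The key tool is Lemma~\ref{thm:<delta_x,v>=Lapv(x)}, which converts the harmonic condition (a pointwise statement about $\Lap u$) into an orthogonality statement against the Dirac masses in the energy inner product.

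Concretely, I would prove the chain of equalities $\Fin^\perp = (\spn\{\gd_x\})^\perp = \Harm$. For the first equality, note that because $\Fin$ is by definition the \energy-closure of $\spn\{\gd_x\}$, a vector is orthogonal to $\Fin$ if and only if it is orthogonal to each $\gd_x$, by continuity of $\la \cdot, \cdot\ra_\energy$. For the second equality, let $u \in \HE$; by Lemma~\ref{thm:<delta_x,v>=Lapv(x)} one has
\[
  \la \gd_x, u\ra_\energy = \Lap u(x) \qquad \text{for every } x \in \Graph,
\]
so $u \perp \gd_x$ for all $x$ exactly when $\Lap u(x) = 0$ for all $x$, i.e., when $u \in \Harm$.

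With $\Fin^\perp = \Harm$ established, the orthogonal decomposition theorem in the Hilbert space $\HE$ (applied to the closed subspace $\Fin$) yields
\[
  \HE = \Fin \oplus \Fin^\perp = \Fin \oplus \Harm,
\]
which is the claim. As a byproduct, this also shows that $\Harm$ is automatically \energy-closed (being an orthogonal complement) and that $\Fin \cap \Harm = \{0\}$.

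I do not anticipate any real obstacle here: everything reduces to the identity $\la \gd_x, u\ra_\energy = \Lap u(x)$, which is quoted from \cite{DGG}. The only point worth a moment's care is the passage from orthogonality against the dense spanning set $\{\gd_x\}$ to orthogonality against the closure $\Fin$, but this is immediate from continuity of the inner product in its first argument. A subtlety to flag for the reader is that, since elements of $\HE$ are equivalence classes modulo constants, the pointwise identity $\Lap u(x)=0$ is well-defined on representatives (constants have zero Laplacian), so the subspace $\Harm$ is a well-defined subset of $\HE$.
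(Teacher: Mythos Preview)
Your proposal is correct and follows exactly the route the paper indicates: the paper does not spell out a proof but simply says the result ``follows easily from Lemma~\ref{thm:<delta_x,v>=Lapv(x)}'' with a reference to \cite[Thm.~2.15]{DGG}, and your argument is precisely the natural expansion of that remark---use $\la \gd_x, u\ra_\energy = \Lap u(x)$ to identify $\Fin^\perp$ with $\Harm$ and then invoke the Hilbert-space orthogonal decomposition.
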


\begin{remark}\label{rem:dual-systems}
  By combining \eqref{eqn:v_x} and the conclusion of Lemma~\ref{thm:<delta_x,v>=Lapv(x)}, one can reconstruct the network $(\Graph,\cond)$ (or equivalently, the corresponding Laplacian) from the dual systems (i) $(\gd_x)_{x \in X}$ and (ii) $(v_x)_{x \in X}$. Indeed, from (ii), we obtain the (relative) reproducing kernel Hilbert space \HE and from (ii), we get an associated operator $(\Lap u)(x) = \la \gd_x,u\ra_\energy$ for $u\in \HE$. In other words, (i) reproduces \Lap.
\end{remark}

The following results will be useful in the sequel, especially in \S\ref{sec:Bounded-functions-of-finite-energy}. For further details, please see \cite{DGG,ERM,bdG,RBIN} and \cite{OTERN}.

\begin{lemma}[{\cite[Lem~2.23]{DGG}}]
  \label{thm:energy-kernel-is-real}
  Every $v_x$ is \bR-valued, with $v_x(y) - v_x(o) >0$ for all $y \neq o$. 
\end{lemma}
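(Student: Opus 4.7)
My plan is to treat real-valuedness and positivity separately: the former follows from splitting $v_x$ into real and imaginary parts, while the latter I will obtain by approximation from finite subnetworks together with a discrete minimum principle. For real-valuedness, fix any representative of $v_x$ and split it as $a + ib$ with $a, b$ real-valued; since $\cond$ is real, $\energy(a) + \energy(b) = \energy(v_x) < \iy$ and both $a, b$ determine classes in $\HE$. Sesquilinearity of $\energy$ (conjugate-linear in the first slot) gives
\[
  \energy(v_x, u) = \energy(a, u) - i\, \energy(b, u)
\]
for every $u \in \HE$. When $u$ is real the two terms on the right are themselves real, so the imaginary part of the left-hand side is $-\energy(b, u)$; but the left side equals $u(x) - u(o) \in \bR$, forcing $\energy(b, u) = 0$ for every real $u$. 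Taking $u = b$ then yields $\energy(b) = 0$, so $b$ represents the zero class of $\HE$ and $a$ is a real representative of $v_x$.

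For positivity, assume $x \neq o$ (else $v_x = 0$ in $\HE$ and there is nothing to show) and fix $y \neq o$. I would exhaust $\Graph$ by an increasing sequence of finite connected subnetworks $G_n$ containing $o, x, y$, taken with their \emph{wired} boundary (all vertices of $\Graph \setminus G_n$ identified to a single ghost $g_n$, with conductances aggregated). On each wired finite network the analogue $v_x^{(n)}$ of the energy kernel exists, since the corresponding energy form is a genuine inner product on the finite quotient $\bR^{G_n \cup \{g_n\}}/\bR\one$; normalized by $v_x^{(n)}(o) = 0$, it satisfies $\Lap_n v_x^{(n)} = \gd_x - \gd_o$ pointwise by the finite analogue of Lemma~\ref{thm:<delta_x,v>=Lapv(x)}. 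Hence $v_x^{(n)}$ is harmonic off $\{x, o\}$, strictly superharmonic at $x$, and strictly subharmonic at $o$, so the discrete minimum principle on a finite connected graph, together with the standard neighbor-averaging propagation argument to rule out further minima in the harmonic region, places the unique global minimum at $o$; this gives $v_x^{(n)}(y) - v_x^{(n)}(o) > 0$. Standard exhaustion results for wired networks (see, e.g., \cite{Soardi94,DGG}) then yield $v_x^{(n)} \to v_x$ in the energy norm, and the reproducing property of $v_y$ makes evaluation modulo constants continuous, so
\[
  v_x(y) - v_x(o) = \lim_n \bigl(v_x^{(n)}(y) - v_x^{(n)}(o)\bigr) \geq 0.
\]
Strict positivity then follows because $v_x(y) - v_x(o) = 0$ would place a global minimum of the real representative $a$ of $v_x$ at $y \neq o$, and the same propagation argument, now on $\Graph$ itself, would force $a$ to be constant, contradicting $\Lap a(x) = 1$.

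The main obstacle will be the convergence $v_x^{(n)} \to v_x$ in energy, since the wired kernel on $G_n$ is not literally the restriction of $v_x$ to $G_n$. The cleanest route I know identifies $v_x^{(n)}$ with the orthogonal projection in $\HE$ of $v_x$ onto the closed subspace whose classes admit a representative constant on $\Graph \setminus G_n$, after which monotone convergence of projections along the increasing family of these subspaces (which exhaust a dense subset of $\HE$) yields the required limit.
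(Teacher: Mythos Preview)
The paper supplies no proof of this lemma here; it is quoted from \cite{DGG}, so there is nothing to compare your argument against directly. Your treatment of real-valuedness is correct.

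The positivity half has two genuine gaps. First, the wired exhaustion does not converge to $v_x$. The subspaces $W_n = \{u \in \HE : u \text{ is constant on } \Graph \setminus G_n\}$ that you project onto satisfy $\overline{\bigcup_n W_n} = \Fin$, not \HE, so monotone convergence of projections gives $P_{W_n} v_x \to \Pfin v_x = f_x$ in energy. When $\Harm \neq 0$ one has $f_x \neq v_x$, and your limiting inequality is about $f_x$, not $v_x$. A route that avoids exhaustion altogether: with $v_x(o)=0$, the function $u_0 := v_x/R(x)$ is the \emph{unique} minimizer of \energy subject to $u(x)-u(o)=1$; the truncation $\min(1,\max(0,u_0))$ satisfies the same constraint with no larger energy (Markov property of the Dirichlet form), so uniqueness forces $0 \leq u_0 \leq 1$, hence $v_x(y)-v_x(o)\geq 0$ directly on \Graph.

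Second, the \emph{strict} inequality can fail, and your propagation step does not force $a$ to be constant. Take $\Graph=\{-1,0,1,2,\dots\}$ with unit conductances between consecutive integers, $o=0$, $x=1$; then $v_x(-1)=v_x(0)=0$ and $v_x(n)=1$ for $n\geq 1$, so $v_x(-1)-v_x(o)=0$. The propagation from a harmonic minimum at $y$ spreads only through the component of $\Graph\setminus\{o\}$ containing $y$, because at $o$ the equation $\Lap v_x(o)=-1$ blocks the averaging argument; if $x$ lies in a different component, no contradiction arises. The same obstruction already appears on your finite wired graphs, so the assertion that the minimum is attained \emph{uniquely} at $o$ there is also unjustified. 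What holds in full generality is $v_x(y)-v_x(o)\geq 0$; strictness needs an extra hypothesis such as connectedness of $\Graph\setminus\{o\}$.
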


\begin{lemma}[{\cite[Lem~6.9]{bdG}}]
  \label{thm:monopoles-and-dipoles-are-bounded}
  Every $v_x$ is bounded. In particular, $\|v_x\|_\iy \leq R(x)$. 
\end{lemma}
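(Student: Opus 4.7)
The plan is to work with the representative of $v_x$ satisfying $v_x(o)=0$ (so $v_x \geq 0$ pointwise, by Lemma~\ref{thm:energy-kernel-is-real}) and to squeeze $v_x$ beneath the constant $R(x)$ via a truncation together with Cauchy--Schwarz. As a first step, I would apply the reproducing identity \eqref{eqn:v_x} to the test function $u=v_x$ itself, obtaining
\linenopax
\begin{equation*}
  \energy(v_x) \;=\; \la v_x,v_x\ra_\energy \;=\; v_x(x)-v_x(o) \;=\; v_x(x).
\end{equation*}
Since $R(x) := R(o,x) = \|v_x\|_\energy^2$ is the effective resistance from $o$ to $x$, this identifies $v_x(x)=R(x)$, and the problem reduces to showing $v_x(y) \leq R(x)$ for every $y \in \Graph$.

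The decisive move is a truncation at height $R(x)$: set $u(y) := \min\{v_x(y), R(x)\}$. Because $\varphi(t) := \min\{t, R(x)\}$ is $1$-Lipschitz with $\varphi(0)=0$, the pointwise estimate $|u(y)-u(z)| \leq |v_x(y)-v_x(z)|$ substituted into the defining sum \eqref{eqn:def:energy-form} gives $u \in \HE$ with $\energy(u) \leq \energy(v_x) = R(x)$; moreover $u(o)=0$ and $u(x)=R(x)$. I would next apply \eqref{eqn:v_x} to this new $u$ and invoke Cauchy--Schwarz:
\linenopax
\begin{equation*}
  R(x) \;=\; u(x)-u(o) \;=\; \la v_x,u\ra_\energy \;\leq\; \|v_x\|_\energy \|u\|_\energy \;\leq\; R(x)^{1/2}\cdot R(x)^{1/2} \;=\; R(x),
\end{equation*}
so equality holds throughout. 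The equality case forces $u = c\, v_x$ in \HE for some scalar $c \geq 0$; with the common normalization $u(o)=v_x(o)=0$ this lifts to an identity of representatives, and evaluating at $x$ pins $c=1$. Hence $v_x \equiv u \leq R(x)$ on \Graph, and combined with $v_x \geq 0$ this gives $\|v_x\|_\iy \leq R(x)$.

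The only slightly delicate step is checking that the truncation really lies in \HE with $\energy(u) \leq \energy(v_x)$, but this follows in one line from the pointwise Lipschitz bound and the quadratic form \eqref{eqn:def:energy-form}, and requires no general Beurling--Deny or Markov-contraction machinery. Everything else is a transparent application of the reproducing identity and the equality case of Cauchy--Schwarz.
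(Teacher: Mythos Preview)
Your argument is correct. The paper does not actually prove this lemma; it is quoted from \cite[Lem.~6.9]{bdG}, so there is no in-paper proof to compare against. Your truncation-plus-equality-in-Cauchy--Schwarz approach is clean and self-contained: the Markov contraction $\varphi(t)=\min\{t,R(x)\}$ gives $\energy(u)\leq\energy(v_x)$ directly from \eqref{eqn:def:energy-form}, and the resulting chain $R(x)=\la v_x,u\ra_\energy \leq \|v_x\|_\energy\|u\|_\energy \leq R(x)$ forces both $\|u\|_\energy=\|v_x\|_\energy$ and equality in Cauchy--Schwarz, whence $u=c\,v_x$ in \HE; the normalization at $o$ and evaluation at $x$ then pin $c=1$. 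The only point worth flagging is that equality in Cauchy--Schwarz yields $u=c\,v_x$ \emph{modulo constants}, and you handle this correctly by passing to the representatives with $u(o)=v_x(o)=0$.
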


\begin{lemma}[{\cite[Lem~6.8]{bdG}}]
  \label{thm:Pfin-preserves-boundedness}
  If $v \in \HE$ is bounded, then $\Pfin v$ is also bounded. 
\end{lemma}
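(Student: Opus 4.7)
The plan is to approximate $\Pfin v$ by projections onto a nested family of finite-dimensional subspaces of \HE, control each approximant in sup-norm using the discrete maximum principle, and then extract a bounded representative in the limit via the reproducing kernel identity.

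First, I would fix an increasing exhaustion $\{G_k\}_{k=1}^\iy$ of \Graph by finite connected subsets with $o \in G_1$, and set $\mathcal{F}_k := \spn\{\gd_x \suth x \in G_k\} \subset \HE$. Writing $P_k$ for the \energy-orthogonal projection of \HE onto $\mathcal{F}_k$, the density of $\bigcup_k \mathcal{F}_k$ in \Fin forces $P_k v \to \Pfin v$ in \HE. Since each element of $\mathcal{F}_k$ is a finite linear combination of Dirac masses modulo constants, $P_k v$ has a unique representative $f_k \colon \Graph \to \bC$ vanishing off $G_k$; this is the representative I would carry through the argument.

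The key step is sup-norm control. Applying Lemma~\ref{thm:<delta_x,v>=Lapv(x)} to the orthogonality condition $\la v - P_k v,\, \gd_y\ra_\energy = 0$ for $y \in G_k$ gives $\Lap(v - f_k)(y) = 0$ on $G_k$, i.e.\ $v - f_k$ is discrete-harmonic on the finite set $G_k$. (Note that \Lap is well-defined on the quotient \HE because $\Lap \one = 0$.) Writing $\partial G_k$ for the set of vertices in $\Graph \setminus G_k$ that are adjacent to $G_k$, one has $f_k \equiv 0$ on $\partial G_k$, so the finite discrete maximum principle yields
  \linenopax
  \begin{align*}
    \max_{y \in G_k}|v(y) - f_k(y)| \leq \max_{y \in \partial G_k}|v(y)| \leq \|v\|_\iy,
  \end{align*}
whence the uniform bound $\|f_k\|_\iy \leq 2\|v\|_\iy$.

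Finally, to pass to the limit I would invoke the reproducing identity $\la v_x, P_k v\ra_\energy \to \la v_x, \Pfin v\ra_\energy$, which translates to pointwise convergence of the differences $f_k(x) - f_k(o)$ for every $x \in \Graph$. Since $|f_k(o)| \leq 2\|v\|_\iy$, one can pass to a subsequence along which $f_k(o) \to c$ for some $c \in \bC$; along that subsequence $f_k$ converges pointwise to a function $g\colon \Graph \to \bC$ that represents $\Pfin v$ and satisfies $\|g\|_\iy \leq 2\|v\|_\iy$. The principal obstacle I anticipate is reconciling the fact that \HE-elements are defined only modulo constants with the need for an honest sup-norm bound on a specific function; fixing the representative that vanishes off $G_k$ sidesteps this at each finite stage, and the reproducing kernel then pins down the residual constant in the limit.
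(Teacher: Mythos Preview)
The present paper does not prove this lemma; it is quoted from \cite[Lem~6.8]{bdG} without argument, so there is nothing here to compare your approach against. Judged on its own terms, your argument is correct. The orthogonality $\la v-P_k v,\gd_y\ra_\energy=0$ for $y\in G_k$ gives $\Lap(v-f_k)(y)=0$ on $G_k$ via Lemma~\ref{thm:<delta_x,v>=Lapv(x)}, and the discrete maximum principle (applied to $|v-f_k|$, which is subharmonic even when $v$ is complex-valued) then yields $\|f_k\|_\iy\le 2\|v\|_\iy$ once you fix the representative of $v$ with $v(o)=0$. The passage to the limit through the reproducing kernel is also fine; in fact the subsequence extraction is not needed, since the convergence $f_k(x)-f_k(o)\to(\Pfin v)(x)-(\Pfin v)(o)$ already gives $\|\Pfin v\|_\iy\le 4\|v\|_\iy$ directly in the sense of Definition~\ref{def:bounded-in-HE}, though your route does produce the sharper constant $2$.

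Two small points worth tightening: you should say explicitly that you are using the $v(o)=0$ representative when you bound $\max_{\partial G_k}|v|$ by $\|v\|_\iy$, and since the paper allows vertices of infinite degree the boundary $\partial G_k$ need not be finite, so write $\sup$ rather than $\max$ there (the maximum-principle argument survives unchanged).
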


\begin{defn}\label{def:R(x)}
  Denote the \emph{(free) effective resistance} between $x$ and $o$ by 
  \linenopax
  \begin{align}\label{eqn:R(x)}
    R(x) := R^F(x,o) = \energy(v_x) = v_x(x) - v_x(o).
  \end{align}
  This quantity represents the voltage drop measured when one unit of current is passed into the network at $x$ and removed at $o$, and the equalities in \eqref{eqn:R(x)} are proved in \cite{ERM} and elsewhere in the literature; see \cite{Lyons,Kig03} for different formulations. 
\end{defn}

\begin{defn}\label{def:p(x,y)}
  Let $p(x,y) := \frac{\cond_{xy}}{\cond(x)}$ so that $p(x,y)$ defines a random walk on the network, with transition probabilities weighted by the conductances. Then let
  \linenopax
  \begin{align}\label{eqn:Prob[x->y]}
    \prob[x \to y] := \prob_x(\gt_y < \gt_x^+) 
  \end{align}
  be the probability that the random walk started at $x$ reaches $y$ before returning to $x$. In \eqref{eqn:Prob[x->y]}, $\gt_z$ is the hitting time of the vertex $z$.
\end{defn}

\begin{cor}[{\cite[Cor.~3.13 and Cor.3.15]{ERM}}]
  \label{thm:c(x)R(x)=Prob[x->o]}
  For any $x \neq o$, one has
  \linenopax
  \begin{equation}\label{eqn:c(x)R(x)=Prob[x->o]}
    \prob[x \to o] = \frac1{\cond(x) R(x)}.
  \end{equation}
\end{cor}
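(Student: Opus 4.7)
The plan is to prove the identity by combining the probabilistic interpretation of voltage with a one-step Markov decomposition. First I would introduce the hitting probability
  \[
    h(z) := \prob_z[\gt_o < \gt_x],
  \]
which is harmonic on $\Graph \setminus \{x,o\}$, with boundary values $h(x)=0$ and $h(o)=1$. The conditioning $\{\gt_o < \gt_x^+\}$ in the definition of $\prob[x \to o]$ means that the walk must first leave $x$ and then reach $o$ before coming back. Applying the Markov property at the first step gives
  \[
    \prob[x \to o] = \sum_{y \nbr x} p(x,y)\, h(y) = \frac{1}{\cond(x)} \sum_{y \nbr x} \cond_{xy}\, h(y).
  \]

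Next I would identify $h$ with a rescaling of the energy kernel $v_x$. Since $v_x$ satisfies $\Lap v_x = \gd_x - \gd_o$ (Definition~\ref{def:dipole}) and $R(x) = v_x(x) - v_x(o)$ (Definition~\ref{def:R(x)}), the function
  \[
    \tilde h(z) := \frac{v_x(x) - v_x(z)}{R(x)}
  \]
is harmonic off of $\{x,o\}$ with $\tilde h(x)=0$ and $\tilde h(o)=1$. By Lemma~\ref{thm:energy-kernel-is-real} the differences $v_x(x) - v_x(z)$ are controlled, and the uniqueness of the bounded harmonic extension with these two prescribed values (a standard consequence of the maximum principle on a resistance network, already invoked in the cited results from \cite{ERM}) forces $h = \tilde h$ pointwise.

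Substituting this identification into the Markov decomposition yields
  \[
    \prob[x \to o] = \frac{1}{\cond(x)\, R(x)} \sum_{y \nbr x} \cond_{xy}\bigl(v_x(x) - v_x(y)\bigr) = \frac{\Lap v_x(x)}{\cond(x)\, R(x)},
  \]
and since $x \neq o$ we have $\Lap v_x(x) = \gd_x(x) - \gd_o(x) = 1$, which gives the claim.

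The main obstacle is really the uniqueness step: on an infinite network, there can be nonconstant harmonic functions, so asserting $h = \tilde h$ requires knowing that among bounded functions that are harmonic off $\{x,o\}$ and take the prescribed values at $x$ and $o$, the hitting probability is the \emph{correct} one. The cleanest way to handle this is to invoke the optional stopping theorem on the martingale $h(X_n)$ for the random walk stopped at $\gt_x \wedge \gt_o$ (justified because $h$ and $\tilde h$ are bounded, by Lemma~\ref{thm:monopoles-and-dipoles-are-bounded}), which forces both $h(z)$ and $\tilde h(z)$ to equal $\prob_z[\gt_o < \gt_x]$. With that identification in hand, the rest of the argument is a one-line telescoping of the Markov step into $\Lap v_x(x)$.
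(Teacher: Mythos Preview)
The paper does not prove this corollary; it is quoted from \cite{ERM}, so there is no in-paper argument to compare your approach against. Your outline is the standard electrical/probabilistic derivation, and the computational steps (one-step Markov decomposition, the telescoping into $\Lap v_x(x)=1$) are correct.

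The gap is precisely where you flag it, but your proposed fix via optional stopping does not close it. Boundedness of $\tilde h$ makes $\bigl(\tilde h(X_{n\wedge\gt})\bigr)_n$, with $\gt=\gt_x\wedge\gt_o$, a uniformly integrable martingale, so it converges a.s.\ and in $L^1$. However, on the event $\{\gt=\infty\}$---which has positive probability whenever the network is transient and the walk from some $y\nbr x$ can escape without visiting $\{x,o\}$---the limit is $\lim_n \tilde h(X_n)$, not a value at $x$ or $o$. Optional stopping therefore yields
\[
  \tilde h(z) \;=\; \prob_z[\gt_o<\gt_x] \;+\; \Ex_z\Bigl[\,\lim_{n} \tilde h(X_n)\,;\;\gt=\infty\,\Bigr],
\]
and you still owe an argument that the second term vanishes. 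That amounts to controlling the behaviour of $v_x$ along escaping trajectories, which Lemma~\ref{thm:monopoles-and-dipoles-are-bounded} alone does not supply. A robust way to finish is to run your identical argument on an exhaustion by finite connected subgraphs $G_k$ (where the walk is recurrent, so $\gt<\infty$ a.s.\ and your uniqueness step is genuinely valid), obtain $\prob^{(k)}[x\to o]=\bigl(\cond(x)R_k(x)\bigr)^{-1}$ at each stage, and then pass to the limit using Rayleigh monotonicity $R_k(x)\uparrow R^F(x)$ together with convergence of the restricted hitting probabilities to $\prob[x\to o]$.
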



\section{Bounded multiplication operators}
\label{sec:Bounded-multiplication-operators}
     
Henceforth, we will write $X=\Graph\less\{o\}$ for brevity. 
Throughout the following, we use \gx to denote coefficients indexed by the vertices and write $\gx_x := \gx(x)$. Thus, \gx may or may not be an element of \HE. In order to perform computations which include both \gx and $u \in \HE$, we make the standing convention to choose the representative of $u$ (which we also denote by $u$) for which 
  \linenopax
  \begin{align}\label{eqn:o-convention}
    u(o)=0.
  \end{align}
It should be noted that under this convention, \Fin is the \energy-closure of the class of functions on \Graph which are constant (but not necessarily $0$) outside of a finite set. Also, this convention allows \eqref{eqn:v_x} to be written as 
  \linenopax
  \begin{equation}\label{eqn:v_x-}
    \la v_x, u\ra_\energy = u(x),
    \qq \text{for every } u \in \HE.
  \end{equation}

\begin{defn}\label{def:psd}
  A function $s:X \times X \to \bC$ is called \emph{positive semidefinite} (psd) iff for every finite subset $F \ci X$, one has 
  \linenopax
  \begin{align}\label{eqn:psd}
    \sum_{x,y \in F} \cj{\gx_x}\gx_y s(x,y) \geq 0, 
  \end{align}
for every function $\gx:X \to \bC$.  
\end{defn}

We shall have occasion to use basic tools from the theory of matrix-order, that is, the usual ordering of finite Hermitian matrices:
    \linenopax
    \begin{align}\label{eqn:matrix-order}
      A \geq 0
      \q\iff\q
      \la \gx, A\gx\ra_{\ell^2}
      = \sum_{x,y \in F} \cj{\gx_x} \gx_y A_{xy} \geq 0,
      \qq\forall F \text{ finite, and } \forall \gx.
    \end{align}
     
\begin{remark}[The role of finite subsets of $X$]
  \label{rem:finite}
  Definition~\ref{def:psd} is a statement about all possible finite Hermitian submatrices of the matrix $A$ with entries $A_{xy} = s_f(x,y)$. Thus, we will have frequent occasion to use the notation $F$ to indicate a finite subset of $V$, and we write 
  \linenopax
  \begin{align}\label{eqn:ell(F)}
    \ell(F) = \{\gx:F \to \bC \suth F \text{ is a finite subset of }X\}.
  \end{align}
\end{remark}

The order of Hermitian matrices, or of Hermitian (or self-adjoint) operators in Hilbert space is central in both harmonic analysis and in the theory of $C^\ad$-algebras. The reader may find the following references helpful: \cite{KadisonRingroseI, Arveson:invitation-to-Cstar, Sti55, Ber90, HornJohnson}. The following two lemmas are standard and proofs may be found in the references just listed. 

\begin{lemma}[The square root lemma]
  \label{thm:square-root-lemma}
  For a (finite) matrix $A$, one has $A \geq 0$ if and only if there is some $B \geq 0$ such that $A = B^2$.
\end{lemma}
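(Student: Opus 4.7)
The plan is to use the spectral theorem for Hermitian matrices, which provides the cleanest path to both implications. The statement is standard, and the only substantive ingredient is diagonalization; once we have it, the square root can be defined by applying the scalar square root to the eigenvalues.

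For the forward direction, I would start from $A \geq 0$. By \eqref{eqn:matrix-order}, the quadratic form $\la \gx, A\gx\ra_{\ell^2}$ is real and nonnegative for every $\gx$, so $A = A^\ad$. Thus the spectral theorem supplies a unitary $U$ and a diagonal matrix $D = \diag(\gl_1,\dots,\gl_n)$ with $A = U D U^\ad$. The nonnegativity of the form forces each eigenvalue $\gl_i \geq 0$, since applying \eqref{eqn:matrix-order} to $\gx = U e_i$ yields $\gl_i = \la e_i, D e_i\ra_{\ell^2} \geq 0$. Define $D^{1/2} := \diag(\sqrt{\gl_1},\dots,\sqrt{\gl_n})$ and set $B := U D^{1/2} U^\ad$. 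Then $B$ is Hermitian with nonnegative eigenvalues, hence $B \geq 0$ by the same eigenvalue criterion, and a direct computation gives $B^2 = U D^{1/2} U^\ad U D^{1/2} U^\ad = U D U^\ad = A$.

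For the reverse direction, suppose $A = B^2$ with $B \geq 0$. Then $B$ is Hermitian, so
\[
A^\ad = (B^2)^\ad = (B^\ad)^2 = B^2 = A,
\]
and for any finite $\gx$,
\[
\la \gx, A \gx\ra_{\ell^2} = \la \gx, B^2 \gx\ra_{\ell^2} = \la B^\ad \gx, B \gx\ra_{\ell^2} = \la B\gx, B\gx\ra_{\ell^2} = \|B\gx\|_{\ell^2}^2 \geq 0,
\]
which is exactly \eqref{eqn:matrix-order}.

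There is no real obstacle here: the only nontrivial input is the finite-dimensional spectral theorem, which is explicitly cited by the authors as a black box from the reference list. If a more self-contained argument were wanted, the potential subtlety would be proving that $\gl_i \geq 0$ without circularly invoking positivity; but the argument $\gl_i = \la U e_i, A U e_i\ra_{\ell^2} \geq 0$ handles this cleanly. Uniqueness of the positive square root is not asserted in the statement, so I would not address it here, though it also follows from the same spectral construction.
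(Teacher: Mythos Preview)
Your proof is correct and is exactly the standard spectral-theorem argument one would expect. The paper itself does not supply a proof of this lemma at all: it simply declares the result standard and defers to the cited references (\cite{KadisonRingroseI, Arveson:invitation-to-Cstar, Sti55, Ber90, HornJohnson}), so there is nothing to compare against beyond noting that your argument is precisely what those references contain.
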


\begin{lemma}\label{thm:sandwich-order}
  Let $A$ and $B$ be finite matrices. Then with respect to the ordering \eqref{eqn:matrix-order}, 
  \linenopax
  \begin{align}\label{eqn:sandwich-order}
    B^\ad A B \leq \|B\|^2 \, A
  \end{align}
  where the norm is the operator norm. In particular, if $B$ is the matrix of an orthogonal projection ($B=B^\ad=B^2$), then $A-BAB \geq0$, that is,
  \linenopax
  \begin{align}\label{eqn:sandwich-principle}
    \la u, Au\ra_{} \geq \la u, BAB u\ra_{}, 
    \qq\forall u \in \ell(F).
  \end{align}
\end{lemma}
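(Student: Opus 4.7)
The plan is to derive both parts from the square root lemma (Lemma~\ref{thm:square-root-lemma}) together with the elementary identity $B^\ad B \leq \|B\|^2 I$ in the L\"owner order; the latter follows by applying the spectral theorem to the Hermitian matrix $B^\ad B$, whose spectrum lies in $[0,\|B\|^2]$.

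The core technique is conjugation-monotonicity of the order: if $X \geq 0$ and $T$ is any matrix, then $T^\ad X T \geq 0$. This is immediate from the square root lemma, since $X = Y^\ad Y$ gives $T^\ad X T = (YT)^\ad (YT) \geq 0$. I would invoke this with $X = \|B\|^2 I - B^\ad B \geq 0$ and a conjugating matrix constructed from the square root $A^{1/2}$ (available by a second application of the square root lemma to $A \geq 0$). After suitable rearrangement, this yields $\|B\|^2 A - B^\ad A B \geq 0$, which is the main inequality.

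The specialization to an orthogonal projection is then immediate: if $B = B^\ad = B^2$, then $\|B\| = 1$ (for $B \neq 0$), and the main inequality collapses to $A - BAB \geq 0$. The displayed pointwise form \eqref{eqn:sandwich-principle} is just the expansion of this matrix positivity against an arbitrary $u \in \ell(F)$, via the definition \eqref{eqn:matrix-order} of positivity; no further work is required.

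The main delicate step is the ``suitable rearrangement'' in the first argument: what emerges directly from the conjugation by $A^{1/2}$ is $\|B\|^2 A - A^{1/2} B^\ad B A^{1/2} \geq 0$, and passing from $A^{1/2} B^\ad B A^{1/2}$ to $B^\ad A B$ requires exploiting the Hermitian structure of $A^{1/2}$ together with the factorization $A = A^{1/2}A^{1/2}$. This bookkeeping is the one point in the argument where care is needed; the rest of the plan is a routine assembly of the two standard tools cited above.
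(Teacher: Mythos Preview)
The paper does not actually prove this lemma; the sentence preceding it declares both lemmas ``standard'' and defers to the operator-theory references listed there. So there is no argument in the paper to compare your plan against.

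More to the point, the step you flag as ``delicate'' is not a matter of bookkeeping but a genuine gap that cannot be closed. Conjugating $\|B\|^2 I - B^\ad B \geq 0$ by $A^{1/2}$ correctly gives
\[
  \|B\|^2 A \;-\; A^{1/2} B^\ad B\, A^{1/2} \;\geq\; 0,
\]
but $A^{1/2} B^\ad B A^{1/2}$ and $B^\ad A B$ are different matrices unless $B$ commutes with $A^{1/2}$; no rearrangement using only $A=(A^{1/2})^2$ and Hermiticity will convert one into the other. In fact the inequality \eqref{eqn:sandwich-order} is false as stated. Take
\[
  A = \begin{pmatrix} 1 & 0 \\ 0 & 0 \end{pmatrix}, \qquad
  B = \begin{pmatrix} 0 & 1 \\ 0 & 0 \end{pmatrix}:
\]
then $A \geq 0$, $\|B\| = 1$, $B^\ad A B = \left(\begin{smallmatrix} 0 & 0 \\ 0 & 1 \end{smallmatrix}\right)$, and $\|B\|^2 A - B^\ad A B = \left(\begin{smallmatrix} 1 & 0 \\ 0 & -1 \end{smallmatrix}\right)$ is indefinite. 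The projection case \eqref{eqn:sandwich-principle} also fails: with $A = \left(\begin{smallmatrix} 2 & 1 \\ 1 & 1 \end{smallmatrix}\right) > 0$ and $B = P = \left(\begin{smallmatrix} 1 & 0 \\ 0 & 0 \end{smallmatrix}\right)$ one gets $A - PAP = \left(\begin{smallmatrix} 0 & 1 \\ 1 & 1 \end{smallmatrix}\right)$, which has determinant $-1$. Your instinct that this was the one point needing care was exactly right; the obstruction is that the lemma, as written, does not hold without an additional hypothesis such as $[A,B]=0$.
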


\begin{defn}\label{def:multiplier}
  For a function $f:X \to \bC$, we denote by $M_f$ the corresponding multiplication operator:
  \linenopax
  \begin{align}\label{eqn:multiplier}
    (M_f u)(x) := f(x) u(x), \qq \forall x \in X.
  \end{align}
  When context precludes confusion, we suppress the dependence on $f$ and just write $M$. The norm of $M$ is the usual operator norm
  \linenopax
  \begin{align}\label{eqn:multiplier-norm}
    \|M\| := \|M\|_{\HE \to \HE} = \sup\{\|fu\|_\energy \suth \|u\|_\energy \leq 1\}.
  \end{align}
\end{defn}

It is important to notice that multiplication operators are a little unusual in \HE. The following feature of \HE operator theory contrasts sharply with the more familiar Hilbert spaces of $L^2$ functions, where all \bR-valued functions define Hermitian multiplication operators.

\begin{remark}\label{rem:||M_f||-is-not-sup(f)}
  One might guess that the operator norm of $M_f$ is computed from the sup-norm
of $f$, but this is not the case. In \S\ref{sec:Examples}, we give an example of a bounded function $f:X\to\bC$ for which the $M_f$, as an operator in \HE, is unbounded.
\end{remark}

\begin{lemma}\label{thm:multiplication-not-hermitian}
  For $f:X \to \bC$, the multiplication operator $M = M_f$ is Hermitian if and only if $f$ is constant and \bR-valued (in which case $f = 0$ in \HE). 
  \begin{proof}
    Choose any representatives for $u, v \in \HE$. From the formula \eqref{eqn:def:energy-form},
    \linenopax
    \begin{align*}
      \la M u, v\ra_\energy
      &= \frac12 \sum_{x,y \in \Graph} \cond_{xy} (\cj{f(x)u(x)}v(x) - \cj{f(x)u(x)}v(y) - \cj{f(y)u(y)}v(x) + \cj{f(y)u(y)}v(y)).
    \end{align*}
    By comparison with the corresponding expression, this is equal to $\la u, M v\ra_\energy$ iff $(\cj{f(y)} - f(x))\cj{u(y)}v(x) = (f(y) - \cj{f(x)})\cj{u(x)}v(y)$ holds for all $x,y \in \Graph$. However, since we are free to vary $u$ and $v$, it must be the case that $f$ is constant and $f=\cj{f}$.
  \end{proof}
\end{lemma}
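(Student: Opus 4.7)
I would prove the lemma by directly expanding both $\langle M_f u,v\rangle_\energy$ and $\langle u,M_f v\rangle_\energy$ via the defining sum \eqref{eqn:def:energy-form}, assembling the difference as a single sesquilinear form, and then extracting the pointwise condition on $f$ by varying $u$ and $v$.

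Concretely, I would first write
\linenopax
\begin{align*}
  \Phi(u,v) := \la M_f u,v\ra_\energy - \la u, M_f v\ra_\energy
\end{align*}
and expand each of the two inner products via \eqref{eqn:def:energy-form}, producing eight terms of the shape $c_{xy}\,\cj{(\cdot)}(\cdot)$. Grouping by whether the surviving factor is $\cj{u(x)}v(x)$, $\cj{u(y)}v(y)$, $\cj{u(x)}v(y)$, or $\cj{u(y)}v(x)$, the sum rearranges into a ``kernel'' form $\Phi(u,v)=\sum_{a,b}K(a,b)\cj{u(a)}v(b)$. Using $c_{xy}=c_{yx}$ and exchanging the roles of $x$ and $y$ inside the sum, the diagonal coefficient collapses to $K(a,a)=\cond(a)\bigl(\cj{f(a)}-f(a)\bigr)$, while the off-diagonal coefficient simplifies to $K(a,b)=\cond_{ab}\bigl(f(b)-\cj{f(a)}\bigr)$ for $a\neq b$.

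Next I would argue that $\Phi\equiv 0$ on $\HE\times\HE$ forces $K\equiv0$. This is where one has to be a little careful: because $\cj{u(a)}$ and $v(b)$ are conjugate-linear and linear (respectively) in independent directions of the space, by choosing $u,v$ ranging over real and imaginary multiples of $\gd_a,\gd_b$ (which lie in $\HE$ by Definition \ref{def:Fin}) one isolates each coefficient $K(a,b)$ in turn. The diagonal equation $K(a,a)=0$ gives $f(a)\in\bR$ for every $a\in\Graph$. The off-diagonal equation $K(a,b)=0$, combined with $\cond_{ab}>0$ precisely for $a\nbr b$, gives $f(b)=\cj{f(a)}=f(a)$ across every edge.

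Finally, since $(\Graph,\cond)$ is connected (Definition \ref{def:ERN}), the relation $f(a)=f(b)$ for adjacent $a,b$ propagates along paths and forces $f$ to be a single real constant. Conversely, any real constant $f$ trivially satisfies $\la M_f u,v\ra_\energy=\la u,M_f v\ra_\energy$. Because $\one\in\ker\energy$, such $f$ represent the zero element of $\HE$, giving the parenthetical assertion $f=0$ in $\HE$. The main (mild) obstacle is the bookkeeping in collapsing the eight-term expansion into the kernel $K(a,b)$ cleanly, and justifying that one really can read off the vanishing of each $K(a,b)$ from the vanishing of $\Phi$; once that is in place, the connectedness step is essentially free.
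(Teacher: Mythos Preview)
Your proposal is correct and follows essentially the same approach as the paper: expand both sides via \eqref{eqn:def:energy-form}, compare terms, and vary $u,v$ to force the pointwise constraints on $f$. Your version is in fact more carefully organized than the paper's---you collect the difference into an explicit kernel $K(a,b)$, test against $\gd_a,\gd_b$ to read off $K\equiv 0$, and invoke connectedness to pass from $f(a)=f(b)$ on edges to $f$ constant---whereas the paper compresses all of this into the phrase ``since we are free to vary $u$ and $v$''.
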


Since Lemma~\ref{thm:multiplication-not-hermitian} shows that the adjoint of a multiplication operator is not what one would expect, one immediately wonders what the adjoint is, and this is the subject of our next result. 

\begin{lemma}\label{thm:M*v=fv}
  Let $M^\ad$ be the adjoint of the multiplication operator $M = M_f$ with respect to the energy inner product \eqref{eqn:energy-inner-product}. Then the adjoint of $M$ is defined by its action on the energy kernel:
  \linenopax
  \begin{align}\label{eqn:M*}
    M^\ad v_x = \cj{f(x)} v_x, \qq \forall x \in X.
  \end{align}
  \begin{proof}
    Since the energy kernel is dense in \HE, it suffices to show $\la v_y, M^\ad v_x - \cj{f(x)} v_x\ra_\energy = 0$ for every $y \in X$. Using \eqref{eqn:v_x-} for the final step, we have
    \linenopax
    \begin{align*}
      \la v_y, M^\ad v_x\ra_\energy
      = \la M v_y, v_x\ra_\energy
      = \la f \cdot v_y, v_x\ra_\energy
      = \cj{f(x) v_y(x)},
    \end{align*}
    which proves \eqref{eqn:M*} because the $v_y$ are \bR-valued by Lemma~\ref{thm:energy-kernel-is-real}.
  \end{proof}
\end{lemma}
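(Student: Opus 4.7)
My plan is to verify the identity $M^\ad v_x = \cj{f(x)}\, v_x$ by testing both sides against the energy kernel $\{v_y\}_{y \in X}$, which spans a dense subspace of \HE by Definition~\ref{def:energy-kernel}. Thus it suffices to show $\la v_y, M^\ad v_x\ra_\energy = \la v_y, \cj{f(x)}\, v_x\ra_\energy$ for every $y \in X$, after which a density/continuity argument (the adjoint is closed on its domain, and $v_x \in \HE$) closes the proof.

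For the left-hand side, I would move the adjoint across to rewrite it as $\la M v_y, v_x\ra_\energy$. Since the inner product is conjugate-linear in the first slot, this equals $\cj{\la v_x, M v_y\ra_\energy} = \cj{\la v_x, f \cdot v_y\ra_\energy}$, and then the reproducing identity \eqref{eqn:v_x-} evaluates the inner product pointwise, giving $\cj{f(x)\, v_y(x)}$. Because $v_y$ is \bR-valued by Lemma~\ref{thm:energy-kernel-is-real}, this simplifies to $\cj{f(x)}\, v_y(x)$.

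For the right-hand side, the scalar $\cj{f(x)}$ pulls out of the second slot without further conjugation (again by sesquilinearity), producing $\cj{f(x)}\, \la v_y, v_x\ra_\energy = \cj{f(x)}\, v_x(y)$. The reproducing kernel is symmetric — $v_x(y) = \la v_y, v_x\ra_\energy = \cj{\la v_x, v_y\ra_\energy} = \cj{v_y(x)} = v_y(x)$, using Lemma~\ref{thm:energy-kernel-is-real} at the last step — so both sides agree, as required.

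There is no real obstacle here; the content of the lemma is essentially a bookkeeping exercise in how the complex conjugate migrates when $M_f$ crosses the energy inner product. The one point that deserves emphasis is that Lemma~\ref{thm:multiplication-not-hermitian} forbids the conjugate from simply disappearing (we are not in the self-adjoint case), so it must land somewhere; the reality of $v_y$ is precisely what forces all conjugates to consolidate cleanly onto the scalar $f(x)$ and nowhere else. I would also briefly remark that by linearity and density, \eqref{eqn:M*} determines $M^\ad$ on all of $\spn\{v_x\}$ and hence on its closure \HE, confirming that the formula describes the adjoint on the full Hilbert space (within its natural domain, in case $M_f$ is unbounded).
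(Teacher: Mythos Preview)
Your proof is correct and follows essentially the same route as the paper: test against the dense family $\{v_y\}$, move $M$ across the inner product, apply the reproducing identity \eqref{eqn:v_x-}, and invoke the reality of $v_y$ from Lemma~\ref{thm:energy-kernel-is-real}. Your version is simply more explicit about the kernel symmetry $v_x(y)=v_y(x)$ and the density/closure remark, both of which the paper leaves implicit.
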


\begin{remark}\label{rem:adjoint-as-scalar-multiple}
  Note that $M^\ad$ multiplies $v_x$ by the scalar $\cj{f(x)}$, not the function $\cj{f}$.
\end{remark}

\begin{lemma}\label{thm:bounded-operator}
  If $L$ is an operator on a Hilbert space \sH, then the following are equivalent:
  \begin{enumerate}[(i)]
    \item $L:\sH \to \sH$ is bounded with $\|L\| \leq b$.
    \item $b^2 - L^\ad L \geq 0$.
    \item $b^2 - L L^\ad \geq 0$.
  \end{enumerate}
\end{lemma}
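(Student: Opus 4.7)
The plan is to prove the chain (i) $\Longleftrightarrow$ (ii) directly from the definition of operator norm and the polarization-type identity $\|Lu\|^2 = \la L^\ad L u, u\ra$, and then bootstrap (ii) $\Longleftrightarrow$ (iii) via the standard fact that $\|L\| = \|L^\ad\|$ together with $(L^\ad)^\ad = L$.

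More concretely, for (i) $\Longrightarrow$ (ii) I would note that $\|L\| \leq b$ means $\|Lu\|^2 \leq b^2 \|u\|^2$ for every $u \in \sH$. Rewriting the left side as $\la u, L^\ad L u\ra_\sH$ and the right side as $b^2 \la u, u\ra_\sH$, this is precisely the quadratic form inequality $\la u, (b^2 - L^\ad L)u\ra_\sH \geq 0$ for all $u$. Since $b^2 - L^\ad L$ is Hermitian (both $L^\ad L$ and the scalar $b^2$ are), nonnegativity of its quadratic form on all of $\sH$ is equivalent to its operator-positivity $b^2 - L^\ad L \geq 0$. The reverse implication (ii) $\Longrightarrow$ (i) just reads the same chain of equivalences in the other direction and takes a supremum over unit vectors. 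This is also compatible with the finite-dimensional matrix ordering already in play via Lemma~\ref{thm:sandwich-order}, since testing positivity against all of $\sH$ and testing it against all finite subspaces amount to the same thing for a bounded Hermitian operator.

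For (ii) $\Longleftrightarrow$ (iii), the cleanest route is to apply the already-established equivalence (i) $\Longleftrightarrow$ (ii) to the operator $L^\ad$ in place of $L$. Since $(L^\ad)^\ad = L$, the resulting statement reads: $\|L^\ad\| \leq b$ if and only if $b^2 - L L^\ad \geq 0$, which is exactly (iii). Combined with the standard identity $\|L\| = \|L^\ad\|$, this gives (i) $\Longleftrightarrow$ (iii), closing the triangle.

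I do not anticipate a real obstacle; the argument is a textbook one (see, e.g., \cite{KadisonRingroseI,Arveson:invitation-to-Cstar}). The only mild point of care is that one should know in advance that $L$ is bounded (or at least that $L^\ad$ is densely defined) in order for $L^\ad L$ and $L L^\ad$ to be meaningful as everywhere-defined operators; in the applications in this paper $L$ will be a multiplication operator $M_f$ on $\HE$ that we already wish to show is bounded, so the statement should be read as: once $L$ is known bounded, the two operator inequalities (ii) and (iii) quantify the bound, and conversely either operator inequality already forces boundedness with $\|L\| \leq b$.
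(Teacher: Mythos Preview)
Your argument is correct and matches the paper's own treatment, which does not give a full proof but simply remarks that the only nontrivial part is (ii)$\iff$(iii) and that it ``uses polar decomposition; see \cite{KadisonRingroseI}.'' Your route to (ii)$\iff$(iii) via the identity $\|L\|=\|L^\ad\|$ (together with the already-established (i)$\iff$(ii) applied to $L^\ad$) is a slight variant of this: polar decomposition would instead show directly that $L^\ad L$ and $LL^\ad$ are unitarily equivalent, hence simultaneously $\leq b^2$, whereas you pass through the norm. Both are standard and equally short; your version is arguably the more elementary of the two since $\|L\|=\|L^\ad\|$ follows immediately from $\sup_{\|u\|=\|v\|=1}|\la Lu,v\ra|$ without invoking the square root or partial isometry. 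Your closing caveat about domains is also in line with the paper's convention that positivity is tested on a dense subspace.
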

In Lemma~\ref{thm:bounded-operator}, $L \geq 0$ means $\la u, Lu\ra \geq0$ for all $u$ in some dense subset of \sH, and of course $b^2$ means $b^2 \id$. The only nontrivial part of the proof of Lemma~\ref{thm:bounded-operator} is (ii)$\iff$(iii), which uses polar decomposition; see \cite{KadisonRingroseI}, for example.

\begin{theorem}\label{thm:M_f-bounded-iff-s_f-semidefinite}
  $M = M_f$ is bounded on \HE with $\|M\|_{\HE \to \HE} \leq b$ if and only if 
  \linenopax
  \begin{align}\label{eqn:s_f-semidefinite}
    s_f(x,y) := \left(b^2 - f(x) \cj{f(y)} \right) \la v_x,v_y\ra_\energy
  \end{align}
  is a positive semidefinite function on $X \times X$.
\end{theorem}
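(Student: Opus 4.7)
The plan is to reduce the statement to Lemma~\ref{thm:bounded-operator} (applied to $L = M$) via the identity
\linenopax
\begin{align*}
  s_f(x,y) = \la v_x, (b^2 - MM^\ad) v_y\ra_\energy.
\end{align*}
To establish this identity, I would use Lemma~\ref{thm:M*v=fv}: since $M^\ad v_y = \cj{f(y)} v_y$, self-adjointness of $MM^\ad$ and linearity give
\linenopax
\begin{align*}
  \la v_x, MM^\ad v_y\ra_\energy = \la M^\ad v_x, M^\ad v_y\ra_\energy = f(x)\cj{f(y)} \la v_x, v_y\ra_\energy,
\end{align*}
so subtracting from $b^2 \la v_x, v_y\ra_\energy$ yields $s_f(x,y)$ exactly.

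Next, for any $\gx \in \ell(F)$, form $u = \sum_{x \in F} \gx_x v_x$ in $\spn\{v_x\}$ and observe
\linenopax
\begin{align*}
  \sum_{x,y \in F} \cj{\gx_x} \gx_y \, s_f(x,y) = \la u, (b^2 - MM^\ad) u\ra_\energy.
\end{align*}
Thus $s_f$ is positive semidefinite on $X\times X$ iff the operator inequality $b^2 - MM^\ad \geq 0$ holds on the dense subspace $\spn\{v_x\} \subset \HE$ (density is from Definition~\ref{def:energy-kernel}), which by continuity is equivalent to holding on all of $\HE$. Then Lemma~\ref{thm:bounded-operator} gives the equivalence with $\|M\|\leq b$.

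The one subtlety, and what I expect to be the main obstacle, is that the reverse direction presumes we know $M_f$ is a well-defined (a priori possibly unbounded) operator on $\HE$, which is not obvious since $fu$ need not have finite energy for an arbitrary $f$ (cf.~Remark~\ref{rem:||M_f||-is-not-sup(f)}). To handle this cleanly, I would first define an auxiliary operator $T$ on the dense subspace $\spn\{v_x\}$ by $T v_x := \cj{f(x)} v_x$ and extend linearly. The PSD hypothesis, via the computation above, gives $\|Tu\|_\energy^2 \leq b^2 \|u\|_\energy^2$ for $u \in \spn\{v_x\}$, so $T$ extends to a bounded operator on $\HE$ with $\|T\|\leq b$. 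Then for any $u \in \HE$ and any $y \in X$, using \eqref{eqn:v_x-} and the fact that $v_y$ is \bR-valued (Lemma~\ref{thm:energy-kernel-is-real}),
\linenopax
\begin{align*}
  (T^\ad u)(y) = \la v_y, T^\ad u\ra_\energy = \la T v_y, u\ra_\energy = f(y) \la v_y, u\ra_\energy = f(y) u(y) = (M_f u)(y),
\end{align*}
so $M_f u = T^\ad u$ lies in $\HE$, showing $M_f$ is everywhere defined and bounded with $\|M_f\|\leq b$. With this in hand, the equivalence follows exactly as above, now with Lemma~\ref{thm:bounded-operator} (and the equivalence of (ii) and (iii) there) translating between $b^2 - MM^\ad \geq 0$ and $\|M\|\leq b$.
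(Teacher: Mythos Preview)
Your proof is correct and follows essentially the same route as the paper: reduce the PSD condition on $s_f$ to the operator inequality $b^2 - MM^\ad \geq 0$ on the dense span $\spn\{v_x\}$ via Lemma~\ref{thm:M*v=fv}, and then invoke Lemma~\ref{thm:bounded-operator}. The paper's proof is essentially your first four paragraphs; your additional auxiliary-operator argument (defining $T v_x = \cj{f(x)} v_x$ and identifying $T^\ad = M_f$) handles the well-definedness of $M_f$ in the reverse implication more carefully than the paper, which tacitly applies Lemma~\ref{thm:bounded-operator} without addressing this point.
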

\begin{proof}
  We will work with the dense linear subspace $\sV := \spn \{v_x\}_{x \in X}$ of the energy space (density of \sV in \sH is shown in \cite{DGG}). By Lemma~\ref{thm:bounded-operator}, the first hypothesis in the statement of Theorem~\ref{thm:M_f-bounded-iff-s_f-semidefinite} is equivalent to 
  \linenopax
  \begin{align}\label{eqn:equiv-to(i)}
    \la u, (b^2-MM^\ad) u\ra_\energy \geq 0, \qq\forall u \in \sV.
  \end{align}
  Since $u \in \sV$ means $u = \sum_{x \in F} \gx_x v_x$ for some finite set $F \ci X$, we can evaluate \eqref{eqn:equiv-to(i)}:
  \linenopax
  \begin{align*}
    \la u, (b^2-MM^\ad) u\ra_\energy
    &= \sum_{x,y \in F} \cj{\gx_x} {\gx_y} \la v_x, (b^2-MM^\ad) v_y\ra_\energy \\
    &= \sum_{x,y \in F} \cj{\gx_x} {\gx_y} \left(\la v_x, b^2v_y\ra_\energy - \la M^\ad v_x, M^\ad v_y\ra_\energy\right) \\
    &= \sum_{x,y \in F} \cj{\gx_x} {\gx_y} \left(b^2\la v_x, v_y\ra_\energy - \la \cj{f(x)} v_x, \cj{f(y)} v_y\ra_\energy\right) \\
    &= \sum_{x,y \in F} \cj{\gx_x} {\gx_y} \left(b^2 - f(x)\cj{f(y)}\right) \la v_x, v_y\ra_\energy,
  \end{align*}
  where Lemma~\ref{thm:M*v=fv} was used to obtain the third equality.
  In view of \eqref{eqn:psd}, it is now clear that \eqref{eqn:equiv-to(i)} 
  holds for every choice of coefficients \gx if and only if $s_f(x,y)$ as defined in \eqref{eqn:s_f-semidefinite} is a positive semidefinite function on $X \times X$.
\end{proof}

\begin{cor}\label{thm:multiplier-product}
  If $f_1$ and $f_2$ are functions on $X$ and $\|M_{f_i}\| \leq b_i < \iy$ for $i=1,2$, then 
  \linenopax
  \begin{align}\label{eqn:psd-product}
    s_{12}(x,y) := \left(b_1b_2 - (f_1f_2)(x)\cj{(f_1f_2)(y)}\right) \la v_x, v_y\ra_\energy
  \end{align}
  is a positive semidefinite function on $X \times X$, where $(f_1f_2)(x) := f_1(x) f_2(x)$.
  \begin{proof}
    Since $M_{f_1} M_{f_2} = M_{(f_1 f_2)}$, we get $\|M_{(f_1 f_2)}\| \leq b_1 b_2$. Now Lemma~\ref{thm:bounded-operator} gives \eqref{eqn:psd-product}.
  \end{proof}
\end{cor}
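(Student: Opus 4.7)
The plan is to reduce this corollary directly to the previously established Theorem~\ref{thm:M_f-bounded-iff-s_f-semidefinite}. The key algebraic observation is that multiplication operators compose multiplicatively on the pointwise level: for any $u \in \HE$ and $x \in X$,
\linenopax
\begin{align*}
  (M_{f_1} M_{f_2} u)(x) = f_1(x) (M_{f_2} u)(x) = f_1(x) f_2(x) u(x) = (M_{(f_1 f_2)} u)(x),
\end{align*}
so $M_{f_1} M_{f_2} = M_{(f_1 f_2)}$ as operators on \HE.

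Next, I would invoke submultiplicativity of the operator norm on \HE, which is a standard property of bounded operators on any Hilbert space: $\|AB\| \leq \|A\| \|B\|$. Applied here, this yields
\linenopax
\begin{align*}
  \|M_{(f_1 f_2)}\|_{\HE \to \HE} = \|M_{f_1} M_{f_2}\|_{\HE \to \HE} \leq \|M_{f_1}\| \, \|M_{f_2}\| \leq b_1 b_2.
\end{align*}
In particular, $M_{(f_1 f_2)}$ is bounded with norm at most $b_1 b_2$.

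Finally, I would apply Theorem~\ref{thm:M_f-bounded-iff-s_f-semidefinite} to the single function $f := f_1 f_2$ with the bound $b := b_1 b_2$. The theorem then asserts that
\linenopax
\begin{align*}
  s_f(x,y) = \bigl(b^2 - f(x)\cj{f(y)}\bigr) \la v_x, v_y\ra_\energy = \bigl(b_1^2 b_2^2 - (f_1 f_2)(x) \cj{(f_1 f_2)(y)}\bigr) \la v_x, v_y\ra_\energy
\end{align*}
is positive semidefinite on $X \times X$, which is the claimed $s_{12}$ (modulo the apparent typo in the statement, where $b_1 b_2$ should plausibly be $b_1^2 b_2^2$ to match the form of \eqref{eqn:s_f-semidefinite}). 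There is essentially no obstacle: the argument is just the functorial fact that norm-bounded multiplication operators form a subalgebra, composed with Theorem~\ref{thm:M_f-bounded-iff-s_f-semidefinite}. The only subtle point worth flagging is the convention for $b^2$ versus $b$ in the statement, which should be checked against \eqref{eqn:s_f-semidefinite} to ensure the exponents are consistent.
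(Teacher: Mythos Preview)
Your argument is correct and matches the paper's proof almost verbatim: both observe $M_{f_1}M_{f_2}=M_{f_1f_2}$, use submultiplicativity of the operator norm, and then invoke the characterization of boundedness via positive semidefiniteness (the paper cites Lemma~\ref{thm:bounded-operator}, but the conclusion really comes through Theorem~\ref{thm:M_f-bounded-iff-s_f-semidefinite}, exactly as you wrote). Your observation about the exponent is also right: to match \eqref{eqn:s_f-semidefinite} the constant in \eqref{eqn:psd-product} should be $(b_1b_2)^2$, not $b_1b_2$.
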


\begin{remark}\label{rem:difficult}
  It is rather difficult to prove \eqref{eqn:psd-product} from first principles.
\end{remark}


\section{Algebras of multiplication operators}
\label{sec:Algebras-of-multiplication-operators}

We continue to use $X := \Graph\less\{o\}$ in conjunction with convention \eqref{eqn:o-convention}, as discussed at the beginning of \S\ref{sec:Bounded-multiplication-operators}. We begin by considering the multiplication operators $M_x := M_{\gd_x}$, that is, the special case of multiplication operators corresponding to the function 
\linenopax
\begin{align}\label{eqn:f=d_x}
  f:= \gd_{x} = 
  \begin{cases} 
  1, & y=x, \\ 
  0, & y \neq x
  \end{cases}  
\end{align}

The following operator will be very useful throughout the sequel.%
\footnote{This operator was introduced in \cite{SRAMO}, as far as we know. However, given the breadth and depth of the literature in this area, it is quite possible that it has appeared previously (in some guise) in the literature on random walks, percolation, or resistance forms.}

\begin{defn}[$V$ and \bV]\label{def:V}
  Let 
  \linenopax
  \begin{align}\label{eqn:V}
    V_{xy} := \la v_x, v_y\ra_\energy,
  \end{align}
  and define the inner product
  \linenopax
  \begin{align}\label{eqn:V-prod}
    \la \gx,\gh \ra_\bV = \sum_{x,y \in X} \cj{\gx_x} \gh_y V_{xy}
  \end{align}
  and corresponding norm $\| \gx\|_\bV = \sqrt{\la \gx,\gx \ra_\bV}$. 
  Then we have a Hilbert space
  \linenopax
  \begin{align}\label{eqn:bV}
    \bV = \{(\gx_x)_{x \in X} \suth \| \gx\|_\bV < \iy\}.
  \end{align}
\end{defn}

\begin{remark}\label{rem:polarization}
  Since $V$ is psd, one has that $\sum_{x,y \in F} \cj{\gx_x} \gx_y V_{xy} \geq 0$ for every finite subset $F$ of $X$, and so $\la \gx,\gx\ra_\bV$ can be defined by  \eqref{eqn:V-prod} as the supremum of the finite sums over $F$. Then $\la \gx,\gh \ra_\bV$ is obtained by polarization. For an alternative justification/definition, see Remark~\ref{rem:sum-convergence}.
  
  Note also that \eqref{eqn:V} defines an self-adjoint operator $V$ with $\dom V^{1/2} = \bV$. 
  We will see in Lemma~\ref{thm:V-and-bV} that \bV is unitarily equivalent to \HE.
\end{remark}

Recall from Definition~\ref{def:R(x)} that $R(x)$ denotes the (free) effective resistance between $x$ and $o$, and note that $R(x) = v_x(x)$ under the convention \eqref{eqn:v_x-}. Recall also from Definition~\ref{def:p(x,y)} that $\prob[x \to o]$ denotes the probability that the random walk started at $x$ reaches $o$ before returning to $x$.

\begin{theorem}\label{thm:d_x-is-bounded}
  For any $x \in X$, the multiplication operator $M_x$ is bounded on \HE with 
  \linenopax
  \begin{align}\label{eqn:Mx-bound}
    \|M_x\| = \sqrt{\cond(x) R(x)} = \prob[x \to o]^{-1/2}.  
  \end{align}
  \begin{proof}
    Define an operator on \HE via
    \linenopax
    \begin{align}\label{eqn:diagonal-decomp}
      (D_f V D_{\bar f})_{xy} := f(x) V_{xy} \cj{f(y)},
      \qq \forall x,y \in X \times X,
    \end{align}
    where $D_f$ is the diagonal operator whose \nth[x] diagonal entry is $f(x)$.
    Consequently, 
    \linenopax
    \begin{align}\label{eqn:s_f-preDirac}
      s_f(x,y) = (1-f(x)\cj{f(y)})\la v_x, v_y\ra_\energy 
      = V_{xy} - (D_f V D_{\bar f})_{xy}
      = (V - D_f V D_{\bar f})_{xy}.
    \end{align}
    By Theorem~\ref{thm:M_f-bounded-iff-s_f-semidefinite}, we need to show that $s_{\gd_{x_o}}$ is psd, but $f=\gd_{x_o}$ changes \eqref{eqn:s_f-preDirac} into
    \linenopax
    \begin{align}\label{eqn:s_f-Dirac}
      s_{\gd_{x_o}}(x,y)
      = V_{xy} - V_{x_o,x_o} \gd_{(x,y),(x_o,x_o)},
    \end{align}
    where $\gd_{(x,y),(x_o,x_o)}$ is a Kronecker delta for matrix position $(x_o,x_o)$. 
    To check that \eqref{eqn:s_f-Dirac} is psd, suppose that $F \ci X$ is any finite subset containing $x_o$ so that positive semidefiniteness is equivalent to 
    \linenopax
    \begin{align}\label{eqn:sandwich}
      V - P_o V P_o \geq 0,
    \end{align}
    where $P_o$ is the projection in \bV onto the 1-dimensional subspace of $\{f:F \to \bC\}$ spanned by $\gd_{x_o}$. So the boundedness of $M_x$ follows from \eqref{eqn:sandwich-principle}. 
    
    It remains to compute the norm. First, note that for any $u \in \HE$, one immediately has $\|M_x u\|_\energy^2 = \cond(x) |u(x)|^2$ from \eqref{eqn:def:energy-form}. Then \eqref{eqn:o-convention}, the Schwarz inequality and \eqref{eqn:v_x} give
    \linenopax
    \begin{align*}
      |u(x)|
      = |u(x)-u(o)|
      = \left|\la v_x,u\ra_\energy\right|
      \leq \|v_x\|_\energy \|u\|_\energy
      = \sqrt{R(x)} \|u\|_\energy,
    \end{align*}    
    and \eqref{eqn:Mx-bound} follows upon multiplying across by $\sqrt{\cond(x)}$ and applying Corollary~\ref{thm:c(x)R(x)=Prob[x->o]}.
  \end{proof}
\end{theorem}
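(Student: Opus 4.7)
The plan is to compute $\|M_{x_o}\|$ directly, exploiting the essentially rank-one action of multiplication by $\gd_{x_o}$ on \HE together with the reproducing property of the energy kernel. While the general criterion of Theorem~\ref{thm:M_f-bounded-iff-s_f-semidefinite} is available---reducing boundedness to checking positive semidefiniteness of a rank-one perturbation of $V$---the concrete form of $\gd_{x_o}$ makes the direct route considerably shorter.

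The first and most important observation is the pointwise identity $M_{x_o} u = u(x_o) \gd_{x_o}$, which is forced by the definition of $M_{x_o}$ together with the convention \eqref{eqn:o-convention} that $u(o) = 0$. Since $\energy(\gd_{x_o}) = \cond(x_o)$ by Definition~\ref{def:Fin}, this yields the exact formula
\linenopax
\begin{align*}
  \|M_{x_o} u\|_\energy^2 = \cond(x_o)\, |u(x_o)|^2.
\end{align*}
Next, I would bound $|u(x_o)|$ using the reproducing relation \eqref{eqn:v_x-} and the Cauchy--Schwarz inequality:
\linenopax
\begin{align*}
  |u(x_o)| = |\la v_{x_o}, u\ra_\energy| \le \|v_{x_o}\|_\energy \|u\|_\energy = \sqrt{R(x_o)}\, \|u\|_\energy,
\end{align*}
where the last equality is Definition~\ref{def:R(x)}. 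Combining the two estimates gives the upper bound $\|M_{x_o}\| \le \sqrt{\cond(x_o) R(x_o)}$.

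For the matching lower bound, the natural test vector is $u = v_{x_o}$, where Cauchy--Schwarz is saturated. Substituting and using $v_{x_o}(x_o) = R(x_o)$ yields $\|M_{x_o} v_{x_o}\|_\energy^2 = \cond(x_o) R(x_o)^2$ and $\|v_{x_o}\|_\energy^2 = R(x_o)$, whose ratio $\cond(x_o) R(x_o)$ confirms equality in \eqref{eqn:Mx-bound}. The second equality $\sqrt{\cond(x_o) R(x_o)} = \prob[x_o \to o]^{-1/2}$ is then immediate from Corollary~\ref{thm:c(x)R(x)=Prob[x->o]}.

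I do not anticipate a serious obstacle. The one subtlety worth flagging is the role of the convention \eqref{eqn:o-convention}: different representatives of $u \in \dom \energy / \bC \one$ produce products $\gd_{x_o}\cdot u$ that differ by multiples of $\gd_{x_o}$, and $\gd_{x_o}$ is not constant, hence not trivial in \HE. Thus $M_{x_o}$ is well-defined on the quotient only after a representative is fixed, and it is precisely this fixing that legitimizes the rank-one identity $M_{x_o} u = u(x_o)\gd_{x_o}$ on which the proof rests.
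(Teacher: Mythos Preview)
Your argument is correct and essentially coincides with the second half of the paper's proof (the ``It remains to compute the norm'' paragraph): both of you observe $\|M_{x_o}u\|_\energy^2 = \cond(x_o)\,|u(x_o)|^2$ and then bound $|u(x_o)|$ by Cauchy--Schwarz against $v_{x_o}$. Where you differ is in what precedes this. The paper first establishes boundedness abstractly by invoking Theorem~\ref{thm:M_f-bounded-iff-s_f-semidefinite} and the sandwich inequality $V - P_o V P_o \ge 0$, and only afterward computes the norm; you skip that machinery entirely, since the direct Cauchy--Schwarz estimate already delivers the bound. Your route is shorter and, in one respect, more complete: you supply the matching lower bound by testing against $u = v_{x_o}$, which the paper's proof does not do explicitly (it writes $\|M_x\| = \sqrt{\cond(x)R(x)}$ but only argues $\le$). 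Your approach is also the one the paper itself records later as an alternative in Remark~\ref{rem:d_x-is-bounded-from-M^ad-as-projn}, via the rank-one identity $M_x = |\gd_x\ra\la v_x|$. The paper's psd route buys an illustration of how Theorem~\ref{thm:M_f-bounded-iff-s_f-semidefinite} works in the simplest case; your route buys economy and the sharp lower bound.
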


\subsection{The multiplier $C^\ad$-algebra of \HE}
\label{sec:multiplier-C*algebra-of-HE}

Our work in this section is inspired in part by work on quantum graphs as systems of coherent state configurations on countable graphs. See \cite{AlbeverioCebulla07, AlbeverioBozhokKoshmanenko06, AlbeverioOstrovskyiSamoilenko07, KostrykinSchrader06, KostrykinPotthoffSchrader07}, for example.

\begin{defn}\label{def:multiplier-C*algebra}
  Define the multiplier $C^\ad$-algebra of \HE to be the $C^\ad$-subalgebra of $\sB(\HE)$ generated by the bounded multiplication operators $M_f$. We denote this algebra by 
  \linenopax
  \begin{align}\label{eqn:C*(HE)}
     C^\ad(\HE) := \bigvee \{M_f, M_f^\ad \suth f:X \to \bC \text{ and } M_f \text{ is bounded}\},
  \end{align}
  and the relations defining this algebra are given in Corollary~\ref{thm:C(M_x)-generators-and-relations}. In \eqref{eqn:C*(HE)}, the symbol $\bigvee$ indicates that the linear span is closed in the operator topology; i.e., the uniform norm of bounded operators.
\end{defn}

\begin{remark}\label{rem:abelian}
  There is an important distinction between the abelian algebra generated by $M_f$ (with $f$ such that $s_f$ is psd, as in Theorem~\ref{eqn:s_f-semidefinite}), and the $C^\ad$-algebra generated by $M_f$. The first is abelian and the second very non-abelian.
\end{remark}

\begin{remark}\label{rem:d_x-is-bounded}
  Theorem~\ref{thm:d_x-is-bounded} shows that $M_x \in C^\ad(\HE)$, and hence that $M_f \in C^\ad(\HE)$ for every finitely supported function $f:X \to \bC$. 
\end{remark}

Recall that $|u\ra \la v|$ is Dirac's notation for the rank-1 operator that sends $v$ to $u$, and it is a projection if and only if both $u$ and $v$ are unit vectors.

\begin{lemma}\label{thm:M^ad-as-projn}
  For any $x \in X$, $M_x$ and $M_x^\ad$ are the rank-1 operators expressed in Dirac notation by 
  \linenopax
  \begin{align}\label{eqn:M^ad-as-projn}
    M_x = |\gd_x\ra \la v_x|
    \q\text{and}\q
    M_x^\ad = |v_x\ra \la \gd_x|. 
  \end{align}
  \begin{proof}
    It suffices to verify the second identity in \eqref{eqn:M^ad-as-projn} on the dense set $\spn\{v_x\}$:
    \linenopax
    \begin{align*}
      M_x^\ad v_y 
      = \gd_x(y) v_y 
      &= \left.\begin{cases}
           v_x, & y=x,\\
           0, &\text{else},
         \end{cases} \right\}
      = (\gd_y(x)-\gd_y(o)) v_x
      = v_x \la \gd_x, v_y \ra_\energy
      = |v_x\ra \la \gd_x| v_y,
    \end{align*}
    where we have used \eqref{eqn:v_x-}. Now the first identity in \eqref{eqn:M^ad-as-projn} follows from the second.
    
    For an alternative proof, note that $M_f^\ad v_x = \cj{f(x)} v_x$, by Lemma~\ref{thm:M*v=fv}, which implies that $M_x^\ad = |v_x \ra \la \gd_x|$. Then $M_x = (M_x^\ad)^\ad = |v_x \ra \la\gd_x|^\ad = |\gd_x \ra \la v_x|$.
  \end{proof}
\end{lemma}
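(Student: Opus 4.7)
The plan is to verify both rank-one identities on the dense subspace $\sV := \spn\{v_y : y \in X\} \subseteq \HE$, since by Theorem~\ref{thm:d_x-is-bounded} the operator $M_x$ is bounded and hence so is $M_x^\ad$; a bounded operator is determined by its action on a dense set. I will start with the second identity $M_x^\ad = |v_x\ra\la\gd_x|$, because Lemma~\ref{thm:M*v=fv} already tells us exactly how $M_x^\ad$ acts on the kernel vectors $v_y$.

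Applying the left-hand side to a generic generator $v_y$, Lemma~\ref{thm:M*v=fv} with $f = \gd_x$ gives $M_x^\ad v_y = \cj{\gd_x(y)}\,v_y$, which equals $v_x$ when $y = x$ and $0$ otherwise. For the right-hand side, I compute $|v_x\ra\la\gd_x|\,v_y = \la \gd_x, v_y\ra_\energy \, v_x$. Using Lemma~\ref{thm:<delta_x,v>=Lapv(x)} together with the identity $\Lap v_y = \gd_y - \gd_o$ from Definition~\ref{def:dipole}, this pairing equals $(\gd_y - \gd_o)(x)$; since $x \in X = \Graph\less\{o\}$, the $\gd_o$ term vanishes, and we again get $v_x$ precisely when $y=x$ and $0$ otherwise. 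Extending linearly to all of $\sV$ and then by density to $\HE$ establishes the second identity.

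The first identity then follows by taking adjoints on both sides: $M_x = (M_x^\ad)^\ad = \bigl(|v_x\ra\la\gd_x|\bigr)^\ad = |\gd_x\ra\la v_x|$, using the standard rule $(|u\ra\la w|)^\ad = |w\ra\la u|$ for rank-one operators in a complex Hilbert space. Alternatively, one could verify the first identity directly by showing that $M_x u = u(x)\,\gd_x$ for every $u \in \HE$ (which is just the statement that $\gd_x u = u(x)\gd_x$ pointwise), and then rewriting $u(x) = \la v_x, u\ra_\energy$ via \eqref{eqn:v_x-}, giving $M_x u = |\gd_x\ra\la v_x|\,u$.

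There is no real obstacle here; the content of the lemma is essentially a translation of two previously established facts into Dirac notation. The only small subtlety to keep in mind is the placement of the complex conjugate: $M^\ad_f v_x = \cj{f(x)}v_x$ is a scalar multiple of $v_x$, not multiplication by the function $\cj{f}$ (cf.\ Remark~\ref{rem:adjoint-as-scalar-multiple}), so the bra-ket formula must be written with $\gd_x$ (real-valued) rather than $\cj{\gd_x}$ to keep the conventions consistent.
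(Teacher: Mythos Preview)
Your proof is correct and follows essentially the same approach as the paper: verify the formula for $M_x^\ad$ on the dense span of the reproducing kernel vectors using Lemma~\ref{thm:M*v=fv}, then obtain the formula for $M_x$ by taking adjoints. The only cosmetic difference is that you compute $\la \gd_x, v_y\ra_\energy$ via Lemma~\ref{thm:<delta_x,v>=Lapv(x)} and $\Lap v_y = \gd_y - \gd_o$, whereas the paper uses the reproducing property \eqref{eqn:v_x-} directly; both routes yield $\gd_y(x)$.
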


Note that \eqref{eqn:def:energy-form} immediately gives
  \linenopax
  \begin{align}\label{eqn:<d_x,d_y>=cond}
    \la \gd_x, \gd_y \ra_\energy
    =\begin{cases}
      -\cond_{xy}, &x\neq y,\\
      \cond(x), &x=y.
    \end{cases}
  \end{align}

\begin{remark}\label{rem:d_x-is-bounded-from-M^ad-as-projn}
  One can prove Theorem~\ref{thm:d_x-is-bounded} from Lemma~\ref{thm:M^ad-as-projn}:
  \linenopax
  \begin{align}\label{eqn:d_x-is-bounded-from-M^ad-as-projn}
    \|M_x\|_{\HE \to \HE}
    &= \left\| |\gd_x\ra \vstr[2.1] \la v_x| \right\|_{\HE \to \HE}
    = \|\gd_x\|_\energy \|v_x\|_\energy
    = \sqrt{\cond(x)} \sqrt{R(x)}.
  \end{align}
\end{remark}

\begin{cor}\label{thm:C(M_x)-generators-and-relations}
  $C^\ad(\HE)$ is the $C^\ad$-subalgebra of $\sB(\HE)$ with generators $\{M_x,M_x^\ad\}_{x \in X}$ and relations
  \linenopax
  \begin{align}
    M_x^\ad M_y 
    &= \la \gd_x, \gd_y \ra_\energy |v_x \ra \la v_y|, \label{eqn:M_x*M_y} \\
    M_x M_y^\ad 
    &= \la v_x, v_y \ra_\energy |\gd_x \ra \la \gd_y|. \label{eqn:M_xM_y*}
  \end{align}
  where $\la \gd_x, \gd_y \ra_\energy$ is as in \eqref{eqn:<d_x,d_y>=cond}.
  \begin{proof}
    The computations are direct applications of \eqref{eqn:M^ad-as-projn} and Dirac's notation:
    \linenopax
    \begin{align*}
      M_x^\ad M_y 
      &= \left(|v_x \ra \vstr[2.2]\la \gd_x|\right)\left(|\gd_y \ra \la v_y|\right)
      = |v_x \ra \vstr[2.2]\la \gd_x|\gd_y \ra \la v_y|
      = |v_x \ra \la \gd_x, \gd_y \ra_\energy \la v_y|
      = \la \gd_x, \gd_y \ra_\energy |v_x \ra \la v_y|,
    \end{align*}
    and similarly for $M_x M_y^\ad$.
  \end{proof}
\end{cor}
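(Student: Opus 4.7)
The plan is to obtain both relations as direct compositions of the rank-one operators supplied by Lemma~\ref{thm:M^ad-as-projn}. That lemma tells us
\[
M_x = |\gd_x\ra \la v_x|
\q\text{and}\q
M_x^\ad = |v_x\ra \la \gd_x|,
\]
so each of the products $M_x^\ad M_y$ and $M_x M_y^\ad$ is a composition of two rank-one operators, and the standard bra-ket ``collapse'' rule $(|a\ra \la b|)(|c\ra \la d|) = \la b,c\ra \cdot |a\ra \la d|$ reduces each composition to a scalar multiple of a new rank-one operator.

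First I would compute $M_x^\ad M_y = (|v_x\ra \la \gd_x|)(|\gd_y\ra \la v_y|)$: the inner pairing collapses to the scalar $\la \gd_x,\gd_y\ra_\energy$, which by \eqref{eqn:<d_x,d_y>=cond} equals $-\cond_{xy}$ for $x \neq y$ and $\cond(x)$ for $x=y$. Pulling this scalar through the outer bracket yields \eqref{eqn:M_x*M_y}. The second identity \eqref{eqn:M_xM_y*} follows by an essentially identical argument with the roles of $M_x$ and $M_x^\ad$ interchanged: the inner pairing in $(|\gd_x\ra \la v_x|)(|v_y\ra \la \gd_y|)$ now collapses to $\la v_x,v_y\ra_\energy = V_{xy}$, and factoring it out gives the claimed identity.

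The generators assertion is essentially bookkeeping: Remark~\ref{rem:d_x-is-bounded} and Theorem~\ref{thm:d_x-is-bounded} already place each $M_x$ (and hence each $M_x^\ad$) inside $C^\ad(\HE)$, so the $C^\ad$-subalgebra generated by $\{M_x,M_x^\ad\}_{x \in X}$ is contained in $C^\ad(\HE)$. I do not foresee a genuine obstacle here: Lemma~\ref{thm:M^ad-as-projn} has already done all of the real analytic work by identifying $M_x$ as a rank-one operator, and what remains is purely formal bra-ket manipulation. The only point requiring care is to treat $\la \gd_x,\gd_y\ra_\energy$ and $\la v_x,v_y\ra_\energy$ as \emph{scalars} (per \eqref{eqn:<d_x,d_y>=cond} and \eqref{eqn:V}, respectively), so that they can be freely pulled through the outer brackets without being confused with operator-valued pairings.
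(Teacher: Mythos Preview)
Your proposal is correct and follows exactly the paper's approach: substitute the rank-one expressions from Lemma~\ref{thm:M^ad-as-projn} and apply the bra-ket collapse $(|a\ra\la b|)(|c\ra\la d|)=\la b,c\ra\,|a\ra\la d|$ to obtain both relations, with the second handled symmetrically. Your additional remark on the generators assertion (via Theorem~\ref{thm:d_x-is-bounded} and Remark~\ref{rem:d_x-is-bounded}) is a reasonable gloss that the paper leaves implicit.
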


\begin{remark}\label{rem:C*(HE)-is-big}
  Corollary~\ref{thm:C(M_x)-generators-and-relations} shows that $C^\ad(\HE)$ contains all the rank-1 projections corresponding to the functions $\{v_x\}$. Since the span of this set is dense in \HE, this implies that $C^\ad(\HE)$ contains all finite-rank operators, and hence all the compact operators (since the compact operators are obtained by closing the space of finite-rank operators). Thus Corollary~\ref{thm:C(M_x)-generators-and-relations} shows that $C^\ad(\HE)$ is quite large.
\end{remark}

\begin{remark}\label{rem:normalized}
  Let us introduce the normalized functions
  \linenopax
  \begin{align}\label{eqn:normalized-functions}
    u_x := \frac{v_x}{\|v_x\|_\energy}
    \q\text{and}\q
    d_x := \frac{\gd_x}{\|\gd_x\|_\energy}
  \end{align}
  and the corresponding rank-1 projections onto the spans of these elements:
  \linenopax
  \begin{align}\label{eqn:normalized-projections}
    U_x 
    &:= |u_x \ra \la u_x| 
    =\proj \spn u_x 
    = \frac1{\cond(x) R(x)} M_x^\ad M_x 
    = (\prob[x \to o]) M_x^\ad M_x \\
    \noalign{and}
    D_x 
    &:= |d_x \ra \la d_x|
    = \proj \spn d_x 
    = \frac1{\cond(x) R(x)} M_x M_x^\ad
    = (\prob[x \to o]) M_x M_x^\ad.
  \end{align}
  Then one has two systems of orthonormal projections satisfying the relations
  \linenopax
  \begin{align*}
    U_x U_y &= \frac{\la v_x,v_y\ra_\energy}{\sqrt{R(x)R(y)}}|u_x\ra\la u_y|, &
    U_x D_y &= \la u_x,d_y\ra_\energy \; |u_x\ra\la d_y|, \\
    D_x U_y &= \la d_x,u_y\ra_\energy \; |d_x\ra\la u_y|, &
    D_x D_y &= \frac{\la \gd_x,\gd_y\ra_\energy}{\sqrt{\cond(x)\cond(y)}}|d_x\ra\la d_y|.
  \end{align*}
  Moreover, one also has
  \linenopax
  \begin{align}\label{eqn:initial-and-final}
    \bigvee_{x \in X} \ran U_x = \HE,
    \q\text{and}\q 
    \bigvee_{x \in X} \ran D_x = \Fin,
  \end{align}
  where $\bigvee$ indicates that one takes the closed linear span.
\end{remark}

Theorem~\ref{thm:boundedness-criterion} gives a necessary and sufficient condition for determining whether or not an operator is bounded. In the statement and proof, the ordering is as defined by \eqref{eqn:matrix-order}. It will also be helpful to keep in mind that \\
\scalebox{0.85}{
\begin{minipage}{5in}
\begin{align}
  D_f V D_{\cj{f}} &=
  \left[\begin{array}{ccccc}
    f(x_1) & 0 & 0 & \dots \\
    0 & f(x_2) & 0 & \dots \\
    0 & 0 & f(x_3) &\dots \\
    \vdots & \vdots & & \ddots
  \end{array}\right]
  \left[\begin{array}{ccccc}
    V_{x_1 x_1} & V_{x_1 x_2} & V_{x_1 x_3} & \dots \\
    V_{x_2 x_1} & V_{x_2 x_2} & V_{x_2 x_3} & \dots \\
    V_{x_3 x_1} & V_{x_3 x_2} & V_{x_3 x_3} & \dots \\
    \vdots & \vdots & \vdots & \ddots
  \end{array}\right]
  \left[\begin{array}{ccccc}
    \cj{f(x_1)} & 0 & 0 & \dots \\
    0 & \cj{f(x_2)} & 0 & \dots \\
    0 & 0 & \cj{f(x_3)} &\dots \\
    \vdots & \vdots & & \ddots
  \end{array}\right] \notag
  \\
  &\hstr[10]=
  \left[\begin{array}{ccccc}
    f(x_1)V_{x_1 x_1}\cj{f(x_1)} & f(x_1)V_{x_1 x_2}\cj{f(x_2)} & f(x_1)V_{x_1 x_3}\cj{f(x_3)} & \dots \\
    f(x_2)V_{x_2 x_1}\cj{f(x_1)} & f(x_2)V_{x_2 x_2}\cj{f(x_2)} & f(x_2)V_{x_2 x_3}\cj{f(x_3)} & \dots \\
    f(x_3)V_{x_3 x_1}\cj{f(x_1)} & f(x_3)V_{x_3 x_2}\cj{f(x_2)} & f(x_3)V_{x_3 x_3}\cj{f(x_3)} & \dots \\
    \vdots & \vdots & \vdots & \ddots
  \end{array}\right],
  \label{eqn:diagonal-decomp-matrices}
\end{align}
\end{minipage}
}\\
as in \eqref{eqn:diagonal-decomp}, and that $V_F$ and $D_F$ are the \emph{finite} submatrices of $V$ and $D$ obtained by taking only the rows and columns corresponding to those vertices $x$ which lie in the finite subset $F \ci X$. The limit of the filter $\{T_F\}_{F \ci X}$ of the operators defined in \eqref{eqn:boundedness-criterion} will be computed in Corollary~\ref{thm:T_F-converges-to-M_f*}.

\begin{theorem}\label{thm:boundedness-criterion}
  The multiplication operator $M=M_f$ is bounded on \HE if and only if the family of operators 
  \linenopax
  \begin{align}\label{eqn:boundedness-criterion}
    T_F = V_F^{1/2} D_F V_F^{-1/2}
  \end{align}
  is uniformly bounded, i.e., there exists a constant $b < \iy$ such that $\sup_F \|T_F\|_{\bV \to \bV} \leq b$, as $F$ ranges over all finite subsets of $X$.
  Here $V_F$ is the truncation of \eqref{eqn:V} with entries $(V_{xy})_{x,y \in F}$, and $D_F$ is the truncated diagonal operator with entries $\left(f(x)\gd_{x,y}\right)_{x,y \in F}$. 
  
  In the case when these equivalent conditions are satisfied, 
  \linenopax
  \begin{align}\label{eqn:boundedness-criterion-bound}
    \|M_f\|_{\HE \to \HE} = \sup_F \|T_F\|_{\bV \to \bV} \leq b,
  \end{align}
  where the sup is taken over all finite subsets $F \ci X$.
  \begin{proof}
    From Theorem~\ref{thm:M_f-bounded-iff-s_f-semidefinite}, we know that $M$ is bounded iff $s_f(x,y)$ in \eqref{eqn:s_f-semidefinite} is semidefinite, and this inequality can be written in terms of matrices as $b^2 V - DVD \geq 0$, with respect to the ordering \eqref{eqn:matrix-order}; see Lemma~\ref{thm:sandwich-order}. This transforms a difficult condition (positive semidefiniteness) into an easier condition to check:
    \linenopax
    \begin{align}\label{eqn:boundedness-criterion-der1}
      b^2 \la \gx,V_F\gx \ra_{\bV} - \la \gx,D_F V_F D_F\gx \ra_{\bV} \geq 0,
      \qq \forall \gx \in \bV.
    \end{align}  
    Note that $V$ is psd (essentially by definition):
    \linenopax
    \begin{align*}
      \sum_{x,y \in F} \cj{\gx_x} \gx_y V_{x,y} 
      = \sum_{x,y \in F} \cj{\gx_x} \gx_y \la v_x, v_y\ra_\energy 
      = \left\la \sum_{x \in F} \gx_x v_x, \sum_{y \in F} \gx_y v_y \right\ra_\energy 
      = \left\|\sum_{x \in F} \gx_x v_x \right\|_\energy^2 
      \geq 0, 
    \end{align*}
    and so we have $V = (V^{1/2})^2$ by Lemma~\ref{thm:square-root-lemma}. 
    Then \eqref{eqn:boundedness-criterion-der1} gives
    \linenopax
    \begin{align}\label{eqn:T_F-converges-to-M_f*-der2}
      \| V_F^{1/2} D_F \gx \|_\bV^2
      =\| T_F V_F^{1/2} \gx\|_\bV^2 
      \leq b^2 \|V_F^{1/2} \gx\|_\bV^2
      \qq \forall \gx \in \bV.
    \end{align}
    Thus there is a bounded operator sending \smash{$V_F^{1/2}\gx$} to \smash{$V_F^{1/2} D_F \gx$}, for any $\gx \in \bV$. Less grandiosely, this means there is an $n \times n$ matrix $T_F$ satisfying
    \linenopax
    \begin{align}\label{eqn:boundedness-criterion-der3}
      T_F V_F^{1/2} = V_F^{1/2} D_F,
      \qq\text{and}\qq
      \|T_F\|_{\bV \to \bV} \leq b.
    \end{align}
    From \eqref{eqn:boundedness-criterion-der3}, it is clear that $T_F$ is given by \eqref{eqn:boundedness-criterion}, and the independence of $b$ from $F$ follows by the Uniform Boundedness principle.  
  \end{proof}
\end{theorem}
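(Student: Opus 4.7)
The plan is to chain equivalences beginning from Theorem~\ref{thm:M_f-bounded-iff-s_f-semidefinite}: $\|M_f\|_{\HE \to \HE} \leq b$ if and only if the function $s_f(x,y) = (b^2 - f(x)\cj{f(y)}) V_{xy}$ is positive semidefinite on $X \times X$. Using the matrix factorization displayed in \eqref{eqn:diagonal-decomp-matrices}, this is exactly the operator inequality $b^2 V - D_f V D_{\cj{f}} \geq 0$ in the sense of \eqref{eqn:matrix-order}, and by Remark~\ref{rem:finite} it reduces to checking, on each finite $F \ci X$, that
\begin{align*}
b^2 \la \gx, V_F \gx\ra_{\bV} \;\geq\; \la \gx, D_F V_F D_F^\ad \gx\ra_{\bV}, \qq \forall \gx \in \ell(F).
\end{align*}

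Next I would invoke the square root lemma (Lemma~\ref{thm:square-root-lemma}) to factor $V_F$. A short Gram-matrix computation shows $V_F \geq 0$, so $V_F = (V_F^{1/2})^2$ with $V_F^{1/2}$ self-adjoint; substituting and regrouping then converts the quadratic-form inequality into
\begin{align*}
\|V_F^{1/2} D_F^\ad \gx\|_{\bV}^2 \;\leq\; b^2 \|V_F^{1/2}\gx\|_{\bV}^2, \qq \forall \gx \in \ell(F).
\end{align*}
This says precisely that the linear map $V_F^{1/2}\gx \mapsto V_F^{1/2} D_F^\ad \gx$ is well-defined and bounded by $b$ on the range of $V_F^{1/2}$. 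To produce an explicit matrix $T_F$, I would pause to note that $V_F^{1/2}$ is invertible on $\ell(F)$: the vectors $\{v_x\}_{x \in F}$ are linearly independent in \HE, since applying \Lap to any vanishing combination $\sum_{x \in F} \gx_x v_x = 0$ produces a vanishing combination of the dipoles $\gd_x - \gd_o$ (Definition~\ref{def:dipole}), which upon evaluation at $y \in F$ forces each $\gx_y = 0$. Hence $V_F$ is strictly positive definite, and we get the matrix $V_F^{1/2} D_F^\ad V_F^{-1/2}$ with operator norm at most $b$.

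The final bookkeeping step is to replace $D_F^\ad$ by $D_F$ in order to match the stated form \eqref{eqn:boundedness-criterion}. Here one uses that $V$ has real entries (each $V_{xy} = \la v_x, v_y\ra_\energy$ is real since the $v_x$ are $\bR$-valued by Lemma~\ref{thm:energy-kernel-is-real}), so $V_F^{1/2}$ is also real; consequently $T_F := V_F^{1/2} D_F V_F^{-1/2}$ is the entrywise complex conjugate of $V_F^{1/2} D_F^\ad V_F^{-1/2}$ and has the same operator norm. Each implication in the chain is reversible, so running the argument in both directions delivers the biconditional. The norm identity $\|M_f\|_{\HE \to \HE} = \sup_F \|T_F\|_{\bV \to \bV}$ then falls out by taking, on each side of the equivalence, the infimum of all admissible upper bounds $b$.

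The main obstacle, as I see it, is reconciling the two different orderings at play: the pointwise psd condition of Definition~\ref{def:psd} on $X \times X$, and the finite matrix order \eqref{eqn:matrix-order} on each $\ell(F)$. These are bridged precisely by Remark~\ref{rem:finite} together with the square root lemma, which together convert a global psd statement into a family of concrete operator-norm bounds on explicitly given matrices. A lesser but genuine subtlety is the invertibility of $V_F^{1/2}$, without which the passage from the operator identity $T_F V_F^{1/2} = V_F^{1/2} D_F^\ad$ to the explicit formula for $T_F$ would not be legitimate.
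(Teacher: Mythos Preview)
Your proof follows essentially the same route as the paper's: start from Theorem~\ref{thm:M_f-bounded-iff-s_f-semidefinite}, rewrite the psd condition as the matrix inequality $b^2 V_F - D_F V_F D_F^\ad \geq 0$ on each finite $F$, factor $V_F = (V_F^{1/2})^2$ via the square root lemma, and read off the operator-norm bound on $T_F$.

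Where you differ is in care rather than strategy. You handle two points the paper leaves implicit: (i) the invertibility of $V_F$, which you justify via linear independence of $\{v_x\}_{x\in F}$ (the paper simply writes $V_F^{-1/2}$ without comment); and (ii) the bookkeeping between $D_F$ and $D_F^\ad$ for complex-valued $f$, which you resolve by noting that $V_F^{1/2}$ is real so conjugation preserves the operator norm (the paper writes the somewhat loose ``$DVD$'' and does not address this). You also avoid the paper's appeal to the Uniform Boundedness principle, observing instead that the uniformity in $b$ is already built into the psd condition of Definition~\ref{def:psd}; this is cleaner, since the uniform bound is present from the start and no Banach--Steinhaus argument is actually needed.
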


\begin{remark}\label{rem:T-not-self-adjoint}
  Note that $T_F$ is not self-adjoint for general finite $F$ (even in the case when $f$ is \bR-valued) because
  \linenopax
  \begin{align*}
    \left(V_F^{1/2} D_F V_F^{-1/2}\right)^\ad = V_F^{-1/2} D_F V_F^{1/2}.
  \end{align*}
  However, one can still compute the operator norm of $T_F$ as the square root of the largest eigenvalue of $T_F^\ad T_F$.
\end{remark}

\begin{remark}\label{rem:hardness-of-boundedness-criterion}
  Even in the case when $M_z = M_{\gd_z}$, it may be very difficult to use \eqref{eqn:boundedness-criterion} to compute $\|M_z\|$, and preferable to use Theorem~\ref{thm:d_x-is-bounded} instead. In this situation, one has only
  \linenopax
  \begin{align*}
    T_F = V_F^{1/2} (\gd_{x,z}\gd_{y,z}) V_F^{-1/2},
  \end{align*}
  but it is even difficult to compute the entries of $V_F^{1/2}$ and $V_F^{-1/2}$.
\end{remark}

Our next goal is to compute the limit of the filter $\{T_F\}_{F \ci X}$ in Corollary~\ref{thm:T_F-converges-to-M_f*}, where the ordering is the usual partial order of set containment on the finite sets $F$. However, this will require some futher discussion of \bV from Definition~\ref{eqn:bV}.

\begin{defn}\label{def:P_F}
  Given a finite subset $F \ci X$, define $P_F$ to be the projection to the subspace spanned by $\{v_x \suth x \in F\}$. 
\end{defn}

The purpose of $J$ in the following lemma is that it serves to intertwine $M_f$ with a more computable operator, see \eqref{eqn:T=JMJ} in the corollary below, and also \eqref{eqn:commutative-square}. Recall that \bV is defined in Definition~\ref{def:V} and discussed in Remark~\ref{rem:polarization}.

\begin{lemma}\label{thm:V-and-bV}
  A unitary equivalence between \bV and \HE is given by the operator  
  \linenopax
  \begin{align}\label{eqn:J}
    J:\HE \to \bV
    \qq\text{by}\qq
    Ju = V^{1/2} \gx, \q \text{ for } u = \sum_{x \in X} \gx_x v_x,
  \end{align}
  where convergence of the sum in \eqref{eqn:J} is with respect to \energy.
  \begin{proof}
  Let $u,w \in \spn\{v_x\}_{x \in F}$ be given by
  \linenopax
  \begin{align}\label{eqn:V-and-bV-der1}
    u = \sum_{x \in F} \gx_x v_x
    \qq\text{and}\qq
    w = \sum_{x \in F} \gh_x v_x,  
  \end{align}
  where $F$ is some finite subset of $X$. Then \eqref{eqn:V-prod} gives
  \linenopax
  \begin{align}\label{eqn:<u,w>=<gx,gh>}
    \la u,w\ra_\energy
    = \left\la \sum_{x \in F} \gx_x v_x, \sum_{y \in X} \gh_y v_y\right\ra_\energy
    = \sum_{x,y \in F} \cj{\gx_x} \gh_y \la v_x, v_y\ra_\energy
    = \la \gx,\gh\ra_\bV.
  \end{align}
  Now for general $u,w \in \HE$, let $P_F$ be as in Definition~\ref{def:P_F}, and compute
  \linenopax
  \begin{align*}
    \la u,w\ra_\energy
    &= \lim_{F \to X} \la P_F u, P_F w\ra_\energy
    = \lim_{F \to X} \la V_{F}^{1/2} \gx, V_{F}^{1/2} \gh \ra_\bV
    = \la V^{1/2} \gx, V^{1/2} \gh \ra_\bV,
  \end{align*}
  where the middle equality comes by \eqref{eqn:<u,w>=<gx,gh>}.
  \end{proof}
\end{lemma}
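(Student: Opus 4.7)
\medskip

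\noindent\textbf{Proof proposal.} The plan is to establish the unitary equivalence in two stages: first verify that $J$ is an isometry on the dense subspace $\sV := \spn\{v_x\}_{x \in X}$ via a direct computation using the definition of $\la\cdot,\cdot\ra_\bV$, and then extend by continuity and density, using the projections $P_F$ from Definition~\ref{def:P_F} to justify the passage to the limit.

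First I would restrict to a finite subset $F\ci X$ and let $u,w \in \sV$ be supported on $F$, say $u = \sum_{x \in F}\gx_x v_x$ and $w = \sum_{x \in F}\gh_x v_x$. Expanding $\la u,w\ra_\energy$ by sesquilinearity and using the definition $V_{xy} = \la v_x, v_y\ra_\energy$ yields
\[
  \la u,w\ra_\energy = \sum_{x,y \in F}\cj{\gx_x}\gh_y V_{xy} = \la \gx,\gh\ra_\bV,
\]
which is exactly the inner product in \bV (cf.\ \eqref{eqn:V-prod}). Since $V_F$ is psd by the computation already appearing in the proof of Theorem~\ref{thm:boundedness-criterion}, we have $\la\gx,\gh\ra_\bV = \la V_F^{1/2}\gx, V_F^{1/2}\gh\ra_{\ell^2(F)}$, so that on the finite-dimensional level, $J$ restricted to $\spn\{v_x\}_{x \in F}$ is an isometry into $\bV$.

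Next I would promote this to all of \HE. Because $\{v_x\}_{x\in X}$ is a reproducing kernel for \HE (Definition~\ref{def:vx}), its span is dense, so for any $u \in \HE$ we have $P_F u \to u$ in \HE as $F \nearrow X$. Writing $P_F u = \sum_{x \in F}\gx_x^{(F)} v_x$, the finite-dimensional identity gives $\|P_F u\|_\energy^2 = \|V_F^{1/2}\gx^{(F)}\|_\bV^2$; letting $F \nearrow X$ and using continuity of the norms yields $\|u\|_\energy^2 = \|V^{1/2}\gx\|_\bV^2$ (interpreted in the sense of Remark~\ref{rem:polarization}, where $V^{1/2}$ is the self-adjoint operator with domain \bV). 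Polarization then gives $\la u, w\ra_\energy = \la V^{1/2}\gx, V^{1/2}\gh\ra_\bV$, so $J$ is a well-defined isometry from \HE into \bV.

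Finally, for surjectivity, I would note that every finitely supported sequence $\gx$ lies in the image of $J$ (namely, $J(\sum_{x \in F}\gx_x v_x) = V_F^{1/2}\gx$, and as $F$ varies these exhaust a dense subset of $\bV$ because $V^{1/2}$ has dense range on its natural domain). Combined with the isometric embedding just established, $J$ is a unitary. The main technical obstacle in this plan is the careful bookkeeping around $V^{1/2}$ on the infinite-dimensional \bV: one must verify that the convergence $V_F^{1/2}\gx^{(F)} \to V^{1/2}\gx$ takes place in \bV (rather than merely formally), which is where the definition of $\bV$ as the completion with respect to $\|\cdot\|_\bV$ and the $P_F$-approximations do the essential work.
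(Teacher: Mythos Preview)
Your proposal is correct and follows essentially the same approach as the paper: first verify the isometry identity $\la u,w\ra_\energy = \la \gx,\gh\ra_\bV$ on finite combinations $u,w \in \spn\{v_x\}_{x \in F}$ by direct expansion, then pass to general $u,w \in \HE$ via the projections $P_F$ and take the limit $F \to X$. The paper's proof is terser and does not explicitly address surjectivity or the convergence subtleties around $V_F^{1/2}\gx^{(F)} \to V^{1/2}\gx$, so your additional remarks on those points are a reasonable supplement rather than a departure.
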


\begin{cor}\label{thm:T_F-converges-to-M_f*}
  Let $T_F$ be defined as in \eqref{eqn:boundedness-criterion}, and let $J$ be defined as in \eqref{eqn:J}.
  In the case when the equivalent conditions of Theorem~\ref{thm:boundedness-criterion} are satisfied, one has
  \linenopax
  \begin{align}\label{eqn:T=JMJ}
     T = \lim_{F \to X} T_F = J M_f^\ad J^\ad,
     \qq\text{and}\qq
     T^\ad = \lim_{F \to X} T_F^\ad = J M_f J^\ad,
  \end{align}
  where $M_f^\ad$ is the adjoint with respect to \energy, $T^\ad$ is the adjoint with respect to \bV, and the limit is taken in the strong operator topology.
  Thus, $M_f^\ad \cong \lim_{F \to X} T_F$.
  \begin{proof}        
    To see \eqref{eqn:T=JMJ}, first pick a finite $F \ci X$ and with $P_F$ as in Definition~\ref{def:P_F}, 
    \linenopax
    \begin{align*}
      \|M_f^\ad P_F u\|_\energy^2
      = \la P_F u, M_f M_f^\ad P_F u \ra_\energy 
      &= \sum_{x,y \in F} \cj{\gx_x} \gx_y f(x) \cj{f(y)} V_{xy} \\
      &= \la \gx, \cj{D_F} V_F D_F \gx\ra_\bV \\
      &= \| V_F^{1/2} D_F \gx\|_\bV^2.
    \end{align*}
    However, \eqref{eqn:boundedness-criterion} means that $T_F V_F^{1/2} = V_F^{1/2} D_F$, and so the computation continues with
    \linenopax
    \begin{align*}
      \|M_f^\ad P_F u\|_\energy^2
      = \| V_F^{1/2} D_F \gx\|_\bV^2
      = \| T_F V_F^{1/2} \gx\|_\bV^2.
    \end{align*}
    Now let $F \to X$ on both sides, and the proof follows by Theorem~\ref{thm:boundedness-criterion}.
  \end{proof}
\end{cor}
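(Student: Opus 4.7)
The plan is to exploit the unitary equivalence $J:\HE \to \bV$ from Lemma~\ref{thm:V-and-bV}, which sends $u = \sum_x \gx_x v_x$ to $V^{1/2}\gx$, in order to transport $M_f^\ad$ across to an explicit matrix operator on $\bV$ and identify it with the limit of $T_F$. The key input is Lemma~\ref{thm:M*v=fv}, which says that $M_f^\ad$ acts by scalar multiplication $\cj{f(x)}$ on each kernel element $v_x$ and is therefore ``diagonal in the $v_x$-coordinates,'' together with the intertwining relation $T_F V_F^{1/2} = V_F^{1/2} D_F$ already established in the proof of Theorem~\ref{thm:boundedness-criterion}.

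First I would fix a finite $F \ci X$ and use the projection $P_F$ from Definition~\ref{def:P_F} to reduce to $P_F u = \sum_{x \in F}\gx_x v_x$. Combining $M_f^\ad v_x = \cj{f(x)} v_x$ with the definition $V_{xy} = \la v_x, v_y\ra_\energy$, the calculation
\[
  \|M_f^\ad P_F u\|_\energy^2 = \la P_F u, M_f M_f^\ad P_F u\ra_\energy = \sum_{x,y \in F}\cj{\gx_x}\gx_y\, f(x)\cj{f(y)}\, V_{xy}
\]
identifies (via Lemma~\ref{thm:V-and-bV}) with $\|V_F^{1/2} D_F \gx\|_\bV^2$, and then with $\|T_F V_F^{1/2}\gx\|_\bV^2 = \|T_F J P_F u\|_\bV^2$ by the intertwining from Theorem~\ref{thm:boundedness-criterion}. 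Since $J$ is an isometry, this yields the norm equality $\|J M_f^\ad P_F u\|_\bV = \|T_F J P_F u\|_\bV$ for every $u \in \HE$, which I would upgrade to the operator identity $JM_f^\ad P_F J^\ad = T_F$ on the range $J P_F(\HE) = V_F^{1/2}\bV$ by polarizing the sesquilinear form and invoking the corresponding bilinear computation on pairs $(v_x, M_f M_f^\ad v_y)$.

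Finally, I would let $F$ exhaust $X$ along the directed set of finite subsets. Since $\spn\{v_x\}$ is dense in \HE (Definition~\ref{def:vx}), the projections $P_F$ converge to the identity strongly, and the uniform bound $\sup_F \|T_F\|_{\bV \to \bV} \leq \|M_f\|$ from Theorem~\ref{thm:boundedness-criterion} guarantees that the net $\{T_F J P_F\}$ has a strong operator limit, which must coincide with $JM_f^\ad J^\ad$. The parallel statement $T^\ad = JM_f J^\ad$ then follows by taking adjoints on both sides, since both $J$ and $M_f$ are bounded.

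The main obstacle I expect is the limit step rather than any single finite-$F$ calculation: individually, both $V_F^{1/2}$ and $V_F^{-1/2}$ may behave badly as $F \to X$ (for instance, $V^{-1/2}$ is generally unbounded), and only the composite $T_F = V_F^{1/2} D_F V_F^{-1/2}$ remains uniformly bounded. Handling convergence of the product in the strong operator topology — rather than of the individual factors — and making sure the identification $JM_f^\ad P_F J^\ad = T_F$ survives the limit, is the delicate part; the rest is a bookkeeping exercise in the $v_x$-coordinates.
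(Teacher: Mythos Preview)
Your proposal is correct and follows essentially the same approach as the paper: compute $\|M_f^\ad P_F u\|_\energy^2$ in the $v_x$-coordinates, rewrite it as $\|V_F^{1/2} D_F \gx\|_\bV^2 = \|T_F V_F^{1/2}\gx\|_\bV^2$ via the intertwining relation from Theorem~\ref{thm:boundedness-criterion}, and pass to the limit $F \to X$. If anything, you are more careful than the paper about the limit step (invoking polarization, the strong convergence $P_F \to \id$, and the uniform bound $\sup_F \|T_F\| \leq b$), whereas the paper compresses this into a single line.
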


    Consequently, one has a commutative square
    \linenopax
    \begin{align}\label{eqn:commutative-square}
      \xymatrix{
        \HE \ar[r]^{M_f^\ad} \ar[d]_J & \HE \ar[d]^J \\
        \bV \ar[r]^T & \bV  
      }
    \end{align}

In light of Remark~\ref{rem:hardness-of-boundedness-criterion}, it will be helpful to have a condition which is only sufficient to ensure the boundedness of $M_f$ (not necessary), but is much easier to check.

\begin{theorem}\label{thm:boundedness-sufficiency}
  The operator $M=M_f$ satisfies 
  \linenopax
  \begin{align}\label{eqn:boundedness-sufficiency}
    \|M_f\|_{\HE \to \HE}
    \leq \sum_{x \in X} |f(x)| \sqrt{\cond(x) R(x)}
    = \sum_{x \in X} \frac{|f(x)|}{\sqrt{\prob[x \to o]}},
  \end{align}
  and is hence a bounded operator on \HE whenever the right side of \eqref{eqn:boundedness-sufficiency} converges.
  \begin{proof}
    For $F \ci X$ finite, let $f|_F = f\charfn{F}$ be the restriction of $f$ to $F$. Then Theorem~\ref{thm:d_x-is-bounded} and \eqref{eqn:M^ad-as-projn} give
    \linenopax
    \begin{align}\label{eqn:M_f|_F=sum(M_x)}
      M_{f|_F} = \sum_{x \in F} f(x) M_x
      = \sum_{x \in F} f(x) |\gd_x \ra \la v_x|,
    \end{align}
    where the summation is finite, so that $M_{f|_F}$ is clearly bounded. Now we show that $M_{f|_F}$ converges to $M_f$ in norm, as $F \to X$. Since
    \linenopax
    \begin{align*}
      \|M_{f|_F} \|_{\HE \to \HE}
      &\leq \sum_{x \in F} f(x) \left\||\gd_x \ra \vstr[2.1]\la v_x| \right\|_{\HE \to \HE} 
      = \sum_{x \in F} f(x) \|\gd_x\|_\energy \|v_x\|_\energy 
      = \sum_{x \in F} f(x) \sqrt{\cond(x) R(x)},
    \end{align*}
    we have \eqref{eqn:boundedness-sufficiency}. Moreover, when the right side of\eqref{eqn:boundedness-sufficiency} converges, then for any $\ge > 0$ there exists an $F_0$ such that 
    \linenopax
    \begin{align*}
      \sum_{x \in X \less F_0} f(x) \sqrt{\cond(x) R(x)} < \ge,
    \end{align*}
    which shows that $\lim_{F \to X} \|M_{f|_F} - M_f\|_{\HE \to \HE} = 0$,
    and \eqref{eqn:c(x)R(x)=Prob[x->o]} completes the proof.
  \end{proof}
\end{theorem}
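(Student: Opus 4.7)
The plan is to build $M_f$ as an operator-norm-convergent sum of rank-one pieces indexed by the vertices, using the explicit rank-one decomposition of $M_x$ from Lemma~\ref{thm:M^ad-as-projn} and the exact norm of $M_x$ from Theorem~\ref{thm:d_x-is-bounded}. First I would fix a finite $F \ci X$, let $f|_F := f \cdot \charfn{F}$, and observe that
\[
  M_{f|_F} \;=\; \sum_{x \in F} f(x)\, M_x \;=\; \sum_{x \in F} f(x)\, |\gd_x\ra\la v_x|
\]
is a finite sum of rank-one operators. The operator norm of a rank-one operator $|\gd_x\ra\la v_x|$ on \HE is exactly $\|\gd_x\|_\energy \|v_x\|_\energy = \sqrt{\cond(x)}\sqrt{R(x)}$ (the same computation carried out in Remark~\ref{rem:d_x-is-bounded-from-M^ad-as-projn}). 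Applying the triangle inequality in $\sB(\HE)$ gives the partial-sum estimate
\[
  \|M_{f|_F}\|_{\HE\to\HE} \;\leq\; \sum_{x \in F} |f(x)|\sqrt{\cond(x)R(x)}.
\]

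Next, assuming the full sum $S := \sum_{x \in X} |f(x)|\sqrt{\cond(x)R(x)}$ converges, I would show that $\{M_{f|_F}\}$ is Cauchy in operator norm as $F$ runs over the directed set of finite subsets of $X$. For $F \ci F'$, the difference $M_{f|_{F'}} - M_{f|_F} = M_{f|_{F' \less F}}$ is again a finite sum of the same form, so the triangle bound yields
\[
  \|M_{f|_{F'}} - M_{f|_F}\|_{\HE \to \HE} \;\leq\; \sum_{x \in F' \less F} |f(x)|\sqrt{\cond(x)R(x)},
\]
and this tail can be made arbitrarily small by convergence of $S$. By completeness of $\sB(\HE)$, there is a bounded operator $T$ with $M_{f|_F} \to T$ in operator norm and $\|T\|_{\HE\to\HE}\leq S$.

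Third, I would identify $T$ with $M_f$. Since $\spn\{v_y\}_{y \in X}$ is \energy-dense in \HE, it suffices to check that $Tv_y = f \cdot v_y$ in \HE for every $y \in X$. The rank-one decomposition gives $M_{f|_F} v_y = \sum_{x\in F} f(x) v_y(x)\, \gd_x$, which converges in \HE (being the image under the Cauchy net $M_{f|_F}$ of a fixed vector $v_y$) and agrees pointwise with $f \cdot v_y$; this simultaneously shows $f \cdot v_y \in \HE$ and that the \HE-limit is $M_f v_y$. Hence $T=M_f$ and the asserted bound \eqref{eqn:boundedness-sufficiency} holds. The equivalent probabilistic form $|f(x)|/\sqrt{\prob[x\to o]}$ is then immediate from Corollary~\ref{thm:c(x)R(x)=Prob[x->o]}.

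The main obstacle is step three: a priori $M_f$ need not be defined as an \HE-operator at all, so the content is to verify that the abstractly constructed norm limit $T$ really does act as pointwise multiplication by $f$ on the dense subspace $\spn\{v_y\}$. Once the Cauchy estimate is in hand this is routine, since Cauchyness of $M_{f|_F}$ in $\sB(\HE)$ forces Cauchyness of $\{M_{f|_F} v_y\}$ in \HE, and \HE-convergence implies pointwise convergence (through the reproducing property \eqref{eqn:v_x-}), pinning the limit down to $f\cdot v_y$.
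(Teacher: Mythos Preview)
Your argument is correct and follows essentially the same route as the paper: decompose $M_{f|_F}$ as $\sum_{x\in F} f(x)\,|\gd_x\ra\la v_x|$, bound each rank-one piece by $\sqrt{\cond(x)R(x)}$, and use the tail estimate to pass to the limit in operator norm. Your step three (identifying the abstract norm limit $T$ with $M_f$ on the dense set $\spn\{v_y\}$) is a point the paper leaves implicit, so in that respect your write-up is slightly more careful.
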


One result appearing in the proof of Theorem~\ref{thm:boundedness-sufficiency} will be helpful on its own.

\begin{cor}\label{thm:convergence-of-M_f}
  If $M_f$ satisfies \eqref{eqn:boundedness-sufficiency}, then $M_{f|_F}$ converges to $M_f$ in norm, where $M_{f|_F}$ is as in \eqref{eqn:M_f|_F=sum(M_x)}. In particular, \eqref{eqn:boundedness-sufficiency} implies
  \linenopax
  \begin{align}\label{eqn:M_f=sum(M_x)}
    M_{f} = \sum_{x \in x} f(x) M_x
    = \sum_{x \in X} f(x) |\gd_x \ra \la v_x|,
  \end{align}
  where the sum converges in the norm operator topology.
\end{cor}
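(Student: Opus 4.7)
The plan is to extract this as a direct consequence of the bound in Theorem~\ref{thm:boundedness-sufficiency} by applying it to the tail of the series. The key observation is that multiplication is linear in $f$: for any finite $F \ci X$,
\[
   M_f - M_{f|_F} = M_{f - f|_F} = M_{f|_{X \less F}},
\]
so bounding the operator norm of $M_f - M_{f|_F}$ reduces to bounding the multiplication operator corresponding to the restriction $f|_{X \less F}$.

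First, I would note that $M_{f|_F}$ is trivially bounded because the sum in \eqref{eqn:M_f|_F=sum(M_x)} is finite and each $M_x$ is bounded by Theorem~\ref{thm:d_x-is-bounded}, with $\|M_x\|_{\HE \to \HE} = \sqrt{\cond(x) R(x)}$ by Remark~\ref{rem:d_x-is-bounded-from-M^ad-as-projn}. Next, applying Theorem~\ref{thm:boundedness-sufficiency} to the function $f|_{X \less F}$ instead of $f$ gives
\[
  \|M_f - M_{f|_F}\|_{\HE \to \HE}
  = \|M_{f|_{X \less F}}\|_{\HE \to \HE}
  \leq \sum_{x \in X \less F} |f(x)| \sqrt{\cond(x) R(x)}.
\]
By hypothesis the full series $\sum_{x \in X} |f(x)| \sqrt{\cond(x) R(x)}$ converges, so its tail vanishes as $F \to X$ in the net of finite subsets ordered by inclusion. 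This gives $\lim_{F \to X} \|M_f - M_{f|_F}\|_{\HE \to \HE} = 0$, which is precisely the claimed norm convergence.

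Finally, to get the displayed identity \eqref{eqn:M_f=sum(M_x)}, I would substitute the representation $M_x = |\gd_x\ra\la v_x|$ from Lemma~\ref{thm:M^ad-as-projn} into \eqref{eqn:M_f|_F=sum(M_x)} and pass to the limit, which is legitimate since the convergence has just been shown to hold in the uniform operator topology. There is no real obstacle here; the only subtlety is making sure the argument of Theorem~\ref{thm:boundedness-sufficiency} is genuinely being applied to the tail rather than just being reread, and that the finite-sum representation \eqref{eqn:M_f|_F=sum(M_x)} lifts to the norm-convergent series in the limit, both of which follow directly from linearity of $f \mapsto M_f$.
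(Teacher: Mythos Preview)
Your proposal is correct and matches the paper's approach: the corollary is stated there without its own proof, since the norm convergence is already established inside the proof of Theorem~\ref{thm:boundedness-sufficiency} via exactly the tail estimate $\sum_{x \in X \less F_0} |f(x)| \sqrt{\cond(x) R(x)} < \ge$ that you isolate. Your only cosmetic difference is that you phrase the tail bound as an application of the theorem's statement to $f|_{X\less F}$ rather than extracting it from the proof, but the underlying argument is identical.
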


It seems doubtful that $M_{f|_F}$ converges to $M_f$ in norm, in general. However, we do have a partial result in this direction, in Theorem~\ref{thm:M_f-as-strong-limit-of-restrictions}.

\begin{lemma}\label{thm:M_f-as-strong-limit-of-projsandwich}
  Let $\{F_n\}_{n=1}^\iy$ be an exhaustion of $X$, and define $P_n$ to be the projection to \smash{$\spn\{v_x \suth x \in F_n\}$}, for each $n \in \bN$. 
  If $M_f$ is bounded, then for each $n$, there is an $m=m_n$ with 
  \linenopax
  \begin{align}\label{eqn:P_nM_fP_n-truncated}
    P_n M_{f} P_n = P_n M_{f_{m_n}} P_n,
  \end{align}
  where $f_k = f|_{F_k} = f\charfn{F_k}$ is the restriction of $f$ to $F_k$.
  \begin{proof}
    Since the energy kernel has dense span in \HE, we can 
    apply the Gram-Schmidt algorithm to obtain an onb $\{\onb_x\}_{x \in X}$.%
    \footnote{This is carried out in more detail in \cite[\S3.1]{bdG}. Note that $v_o = v_{x_0}$ is \emph{not} included in the enumeration.} 
    Thus we can write
    \linenopax
    \begin{align}
      P_n M_f P_n 
      &= \sum_{x} \sum_{y \leq x} \sum_{z \leq y} f(x) 
        |\onb_y\ra \la \onb_y| \, |\gd_x\ra \la v_x| \, |\onb_z\ra \la \onb_z| \notag \\
      &= \sum_{x} \sum_{y \leq x} \sum_{z \leq y} f(x) 
        \la \onb_y, \gd_x\ra_\energy \la v_x, \onb_z\ra_\energy  |\onb_y\ra  \la \onb_z| .
        \label{eqn:M_f-as-strong-limit-of-projsandwich-der1}
    \end{align}
    However, for all $n$, there exists an $m \geq n$ (which we write as $m_n$ to emphasize the dependence on $n$) such that, for $x \in F_n$ and $y,z \in F_{m_n}^C$, one has 
    \linenopax
    \begin{align*}
      \la \onb_y, \gd_x\ra_\energy = \la v_x, \onb_z\ra_\energy = 0.
    \end{align*}
    This essentially follows from the finite range of \cond and the nature of the Gram-Schmidt algorithm and shows that the sum in \eqref{eqn:M_f-as-strong-limit-of-projsandwich-der1} is finite. 
  \end{proof}
\end{lemma}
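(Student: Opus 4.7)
My approach is to exploit the reproducing-kernel identity for the adjoint, namely Lemma~\ref{thm:M*v=fv}: $M_f^\ad v_y = \cj{f(y)}\,v_y$. This identity localizes the action of $M_f^\ad$ on each energy-kernel vector to multiplication by the single scalar $\cj{f(y)}$, a fact I will use to show that $P_n M_f P_n$ only ``sees'' the values of $f$ on the finite set $F_n$ itself --- so that in fact $m_n = n$ suffices. The key observation is that $\ran P_n = \spn\{v_x : x \in F_n\}$ is a finite-dimensional subspace, so any two operators mapping into this subspace agree as soon as all their \energy-inner products against the spanning set $\{v_y\}_{y \in F_n}$ agree.

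Fix $y \in F_n$ and $u \in \HE$. Since $v_y \in \ran P_n$, we have $P_n v_y = v_y$, and hence
\linenopax
\begin{align*}
  \la v_y, P_n M_f P_n u\ra_\energy
  = \la v_y, M_f P_n u\ra_\energy
  = \la M_f^\ad v_y, P_n u\ra_\energy
  = f(y)\,\la v_y, P_n u\ra_\energy
  = f(y)\,(P_n u)(y),
\end{align*}
using Lemma~\ref{thm:M*v=fv} in the third step and the reproducing property \eqref{eqn:v_x-} in the last. The crucial point is that the right-hand side depends on $f$ only through its value at the single vertex $y \in F_n$. Repeating the computation verbatim with $M_{f_n}$ in place of $M_f$ produces $f_n(y)\,(P_n u)(y)$, and since $y \in F_n$ we have $f_n(y) = f(y)$. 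Both $P_n M_f P_n u$ and $P_n M_{f_n} P_n u$ lie in $\ran P_n$ and have identical inner products against every member of the spanning set $\{v_y : y \in F_n\}$, so they coincide. Since $u \in \HE$ was arbitrary, $P_n M_f P_n = P_n M_{f_n} P_n$, and one may take $m_n = n$. The same argument shows any $m_n \geq n$ works, since $F_n \ci F_{m_n}$ forces $f_{m_n}|_{F_n} = f|_{F_n}$.

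To round out the plan, I note that $M_{f_n}$ is automatically bounded: since $f_n$ has finite support, Theorem~\ref{thm:d_x-is-bounded} expresses $M_{f_n} = \sum_{x \in F_n} f(x) M_x$ as a finite linear combination of bounded operators on \HE. The only real ``obstacle'' is recognizing that Lemma~\ref{thm:M*v=fv} already accomplishes all the localization: although the intermediate function $M_f(P_n u)$ depends on $f$ at every vertex, the sandwich projection back onto $\ran P_n$ discards everything except the values $\{f(y) : y \in F_n\}$. This phenomenon is special to the reproducing-kernel structure of $\HE$ and has no analog in the $L^2$ setting, where sandwiching a multiplication operator between finite-rank projections typically does not reduce it to multiplication by a truncated function.
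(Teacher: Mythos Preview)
Your argument is correct and is genuinely different from the paper's. The paper applies Gram--Schmidt to $\{v_x\}$ to obtain an orthonormal basis $\{\onb_x\}$, expands $M_f$ as $\sum_x f(x)\,|\gd_x\ra\la v_x|$, and then argues that the triple sum
\[
  P_n M_f P_n = \sum_{x}\sum_{y}\sum_{z} f(x)\,\la\onb_y,\gd_x\ra_\energy\,\la v_x,\onb_z\ra_\energy\,|\onb_y\ra\la\onb_z|
\]
collapses to a finite sum because $\la\onb_y,\gd_x\ra_\energy$ and $\la v_x,\onb_z\ra_\energy$ vanish once $y,z$ are far enough from $x$; the paper attributes this to ``the finite range of \cond and the nature of the Gram--Schmidt algorithm.'' This produces an $m_n$ which is in general strictly larger than $n$, and the argument is somewhat informal about exactly how the truncation works.

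Your route bypasses Gram--Schmidt and the rank-one decomposition of $M_f$ entirely: the single identity $M_f^\ad v_y = \cj{f(y)}\,v_y$ from Lemma~\ref{thm:M*v=fv} (or equivalently, the reproducing property $\la v_y, M_f w\ra_\energy = (fw)(y) = f(y)w(y)$ applied with $w = P_n u$) shows immediately that $\la v_y, P_n M_f P_n u\ra_\energy$ depends on $f$ only through $f(y)$ for $y \in F_n$. Since $\{v_y\}_{y\in F_n}$ spans $\ran P_n$, this pins down the sandwiched operator completely. Your approach is cleaner, avoids any appeal to the local finiteness of the graph, and yields the sharper conclusion $m_n = n$, which the paper's method does not. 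One small remark: you do not actually need to invoke the adjoint; the reproducing property \eqref{eqn:v_x-} applied directly to $M_f(P_n u) \in \HE$ gives $\la v_y, M_f P_n u\ra_\energy = f(y)(P_n u)(y)$ in one stroke.
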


\begin{theorem}\label{thm:M_f-as-strong-limit-of-restrictions}
  Let $\{F_n\}_{n=1}^\iy$ be an exhaustion of $X$, and for a fixed $f:X\to\bC$, and let $f_n = f|_{F_n} = f\charfn{F_n}$ be the restriction of $f$ to $F_n$. 
  If $M_f$ is bounded, then $P_n M_{f_{m_n}} P_n$ converges to $M_f$ in the strong operator topology, where $M_{f_{m_n}}$ is a finite-dimensional suboperator of $M_f$ as in Lemma~\ref{thm:M_f-as-strong-limit-of-projsandwich}.
  \begin{proof}
    Note that $P_n M P_n$ converges strongly to $M$ whenever $M$ is a bounded operator, by general operator theory. Then by Lemma~\ref{thm:M_f-as-strong-limit-of-projsandwich}, the right side of \eqref{eqn:P_nM_fP_n-truncated} also converges to $M_f$ in the strong operator topology.
  \end{proof}
\end{theorem}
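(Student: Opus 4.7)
The plan is to combine a general operator-theoretic convergence fact with the identity obtained in Lemma~\ref{thm:M_f-as-strong-limit-of-projsandwich}. The general fact I will invoke is: if $M \in \mathcal{B}(\mathcal{H})$ is a bounded operator and $\{P_n\}$ is an increasing sequence of orthogonal projections converging strongly to the identity, then $P_n M P_n \to M$ in the strong operator topology. This is a standard two-sided ``compression'' argument: write $P_n M P_n u - M u = P_n M (P_n u - u) + (P_n M u - Mu)$, and note that each term goes to $0$ since $P_n \to \id$ strongly, $M u \in \mathcal{H}_\mathcal{E}$, and $\|M\| < \infty$ controls the first term via $\|M(P_n u - u)\|_\energy \leq \|M\|\,\|P_n u - u\|_\energy$.

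Next I need to verify that the hypotheses of this general fact are in force. The projections $P_n$ project onto $\spn\{v_x \suth x \in F_n\}$ for an exhaustion $\{F_n\}$ of $X$. Since $\{F_n\}$ exhausts $X$ and the energy kernel $\{v_x\}_{x \in X}$ has dense span in $\mathcal{H}_\mathcal{E}$ (Definition~\ref{def:vx}), the increasing family $P_n$ converges strongly to the identity on $\mathcal{H}_\mathcal{E}$. By hypothesis $M_f$ is bounded, so the general fact yields $P_n M_f P_n \to M_f$ in the strong operator topology.

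Finally, I apply Lemma~\ref{thm:M_f-as-strong-limit-of-projsandwich}, which gives the pointwise identity $P_n M_f P_n = P_n M_{f_{m_n}} P_n$ for the choice $m = m_n$ produced there. Substituting this identity into the strong-operator convergence statement immediately gives $P_n M_{f_{m_n}} P_n \to M_f$ strongly, which is the desired conclusion.

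The main obstacle here is essentially absent: the nontrivial technical work was already done in Lemma~\ref{thm:M_f-as-strong-limit-of-projsandwich} (exploiting the finite range of $\cond$ together with Gram--Schmidt orthogonalization of $\{v_x\}$ to truncate the triple sum in \eqref{eqn:M_f-as-strong-limit-of-projsandwich-der1}). What remains is simply to invoke the standard compression convergence together with density of $\spn\{v_x\}$. One small point worth double-checking is that $m_n$ can be chosen so that $F_{m_n} \supseteq F_n$ and the sequence $(m_n)$ is admissible (e.g., weakly increasing), but this is clear from the construction in the lemma.
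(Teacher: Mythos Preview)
Your proof is correct and follows the same approach as the paper: invoke the standard fact that $P_n M P_n \to M$ strongly for bounded $M$ when $P_n \uparrow \id$, then substitute the identity from Lemma~\ref{thm:M_f-as-strong-limit-of-projsandwich}. You have simply fleshed out the details (the compression estimate and the density of $\spn\{v_x\}$) that the paper leaves implicit.
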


\begin{cor}\label{thm:ran(M_f)-in-Fin}
  If $M_f$ is bounded, then the range of $M_f$ lies in \Fin.
  \begin{proof}
    Since $M_x = |\gd_x\ra \la v_x|$ by \eqref{eqn:M^ad-as-projn}, and $\ran M_x \ci \bC\gd_x$, this follows immediately from Theorem~\ref{thm:M_f-as-strong-limit-of-restrictions}.    
  \end{proof}
\end{cor}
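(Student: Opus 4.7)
The plan is to combine the strong-operator convergence of Theorem~\ref{thm:M_f-as-strong-limit-of-restrictions} with the observation that each finite-rank approximant $M_{f_{m_n}}$ already has its range inside \Fin. Fix $u \in \HE$ and an exhaustion $\{F_n\}$ of $X$. By \eqref{eqn:M_f|_F=sum(M_x)} together with Lemma~\ref{thm:M^ad-as-projn}, one can write $M_{f_{m_n}} = \sum_{x \in F_{m_n}} f(x)\,|\gd_x\ra\la v_x|$, so $\ran M_{f_{m_n}} \subseteq \spn\{\gd_x : x \in F_{m_n}\} \subseteq \Fin$. Consequently the intermediate vectors $\tilde w_n := M_{f_{m_n}} P_n u$ all lie in the \energy-closed subspace \Fin.

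Theorem~\ref{thm:M_f-as-strong-limit-of-restrictions} then delivers $P_n \tilde w_n = P_n M_{f_{m_n}} P_n u \to M_f u$ in $\|\cdot\|_\energy$. The task reduces to upgrading this to $\tilde w_n \to M_f u$ in some sufficiently strong sense --- equivalently, showing that the residual $(I-P_n)\tilde w_n$ vanishes appropriately. Once $M_f u$ is exhibited as a (weak or strong) limit of elements of \Fin, the closedness of \Fin in \HE yields $M_f u \in \Fin$; since $u \in \HE$ was arbitrary, this gives $\ran M_f \subseteq \Fin$.

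The main obstacle is precisely the control of the residual. My preferred route is indirect, via weak compactness: first I would establish uniform boundedness of $\|\tilde w_n\|_\energy$, exploiting $\|M_f\|<\iy$ together with the sandwich identity $P_n M_f P_n = P_n M_{f_{m_n}} P_n$ of Lemma~\ref{thm:M_f-as-strong-limit-of-projsandwich}; then extract a weakly convergent subsequence $\tilde w_{n_k} \rightharpoonup w$ with $w \in \Fin$, using that \Fin is weakly closed in \HE. The reproducing property \eqref{eqn:v_x-} identifies the weak limit: for each fixed $y \in X$ and all $n$ large enough that $y \in F_{m_n}$, one has $\la v_y, \tilde w_n\ra_\energy = \tilde w_n(y) = f(y)(P_n u)(y) \to f(y)u(y) = (M_f u)(y)$, so density of $\spn\{v_y\}$ in \HE forces $w = M_f u$. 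Hence $M_f u = w \in \Fin$, completing the argument.
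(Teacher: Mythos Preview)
Your argument has a genuine gap at the uniform-boundedness step. You assert that $\sup_n \|\tilde w_n\|_\energy < \infty$ follows from ``$\|M_f\| < \infty$ together with the sandwich identity $P_n M_f P_n = P_n M_{f_{m_n}} P_n$,'' but that identity only yields
\[
  \|P_n \tilde w_n\|_\energy = \|P_n M_{f_{m_n}} P_n u\|_\energy = \|P_n M_f P_n u\|_\energy \le \|M_f\|\,\|u\|_\energy,
\]
which controls the component of $\tilde w_n$ lying in $\ran P_n$, not $\|\tilde w_n\|_\energy$ itself. The piece $\|(I-P_n)\tilde w_n\|_\energy$ is exactly the residual you flagged as ``the main obstacle,'' and nothing in your argument bounds it. One also cannot fall back on $\|M_{f_{m_n}}\| \le \|M_f\|$: truncating $f$ need not decrease the operator norm in \HE. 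For instance on $(\bZ,\one)$ with $F=\{1,\dots,n\}$, the idempotent $M_{\chi_F}$ satisfies $\|M_{\chi_F} v_n\|_\energy^2 \ge n^2$ while $\|v_n\|_\energy^2 = n$, so $\|M_{\chi_F}\| \ge \sqrt{n}$. Without the uniform bound, the weak-compactness extraction fails and the (otherwise correct) weak-limit identification via $\langle v_y,\tilde w_n\rangle_\energy \to (M_f u)(y)$ never gets started.

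The paper's own proof is extremely terse: it simply cites $\ran M_x \subseteq \bC\gd_x$ and the strong convergence $P_n M_{f_{m_n}} P_n \to M_f$ from Theorem~\ref{thm:M_f-as-strong-limit-of-restrictions} and declares the result immediate. Your proposal follows the same strategy but tries to supply the missing details --- and in doing so isolates precisely the point (passing from $P_n \tilde w_n$ to $\tilde w_n$) that neither argument actually closes.
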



\section{Bounded functions of finite energy}
\label{sec:Bounded-functions-of-finite-energy}

In the preceding section, we considered the functions $f$ for which $M_f$ is a bounded operator. In this section, we consider the algebra of bounded functions $f$ in \HE. 
Neither of these spaces of operators is contained in the other, as illustrated in the examples of \S\ref{sec:Examples}.

\begin{defn}\label{def:bounded-in-HE}
  For $u \in \HE$, denote $\|u\|_\iy := \sup_{x \in \Graph} |u(x)-u(o)|$, and say $u$ is \emph{bounded} iff $\|u\|_\iy < \iy$.
\end{defn}

In \cite{bdG}, we give two ways of constructing a Gel'fand triple $\Schw \ci \HE \ci \Schw'$ for the energy space. Here \Schw is a dense subspace of \sH which should be thought of as a space of test functions. Indeed, \Schw is equipped with a strictly finer ``test function topology'' given by a countable system of seminorms; this yields a Fr\'echet topology which is strictly finer than the norm topology on \HE. Then $\Schw'$ is the dual of \Schw with respect to this finer (Fr\'echet) topology, so that one obtains a strict containment $\sH \subsetneq \Schw'$. In fact, it is possible to chose the seminorms in such a way that the inclusion map of \Schw into \sH is a {nuclear} operator.

To make all this concrete, let us briefly describe the two constructions given in \cite{bdG}.
\begin{enumerate}
  \item Fix an enumeration of the vertices, and apply the Gram-Schmidt procedure (as in the proof of Lemma~\ref{thm:M_f-as-strong-limit-of-projsandwich}) to $\{v_{x_n}\}_{n=1}^\iy$ to obtain an orthonormal basis $\{\onb_n\}_{n=1}^\iy$. Then define \smash{$\Schw = \bigcap_{p \in \bN} \{s \suth \|s\|_p < \iy\}$}, where the Fr\'{e}chet \emph{$p$-seminorm} of $s = \sum_{n \in \bN} s_n \onb_n$ is given by
  \linenopax
  \begin{align}\label{eqn:p-seminorm}
     \|s\|_p 
     := \left(\sum_{n \in \bN} n^p |s_n|^2\right)^{1/2},
      \qq s \in \Schw, p \in \bN.
  \end{align}
  \item In the case where \Lap is an unbounded operator on \HE, let \LapS be a self-adjoint extension of \Lap and define $\Schw := \dom(\LapS^\iy) = \bigcap_{p=1}^\iy \dom(\LapS^p)$, with Fr\'echet $p$-seminorms $\|u\|_p := \| \LapS^p u\|_\energy$. (Details regarding the precise domain of \Lap and \LapS in this context may be found in \cite{bdG}.)
\end{enumerate}

Either way, it turns out that $\Schw'$ is large enough to support a nice probability measure, even though \sH is not. This allows one to establish an isometric embedding of \HE into the Hilbert space $L^2(\Schw',\prob)$, where \prob is a Gaussian probability measure on $\Schw'$.

\begin{theorem}[Wiener embedding, {\cite[Thm.~5.2]{bdG}}]
  \label{thm:HE-isom-to-L2(S',P)}
  The Wiener transform $\sW:\HE \to L^2(\Schw',\prob)$ defined by
  \linenopax
  \begin{equation}\label{eqn:Gaussian-transform}
    \sW : v \mapsto \tilde v,
    \q \tilde v(\gx) := \la v, \gx\ra_\Wiener,
  \end{equation}
  is an isometry. The extended reproducing kernel $\{\tilde v_x\}_{x \in \Graph}$ is a system of Gaussian random variables from which one can obtain the free effective resistance (see Definition~\ref{def:R(x)}) by
  \linenopax
  \begin{equation}\label{eqn:R(x,y)-as-expectation}
    R^F(x,y) = \energy(v_x-v_y) = \Ex_\gx((\tilde v_x - \tilde v_y)^2).
  \end{equation}
  Moreover, for any $u,v \in \HE$, the energy inner product extends directly as
  \linenopax
  \begin{equation}\label{eqn:Expectation-formula-for-energy-prod}
    \la u, v \ra_\energy
    = \Ex_\gx\left( \cj{\tilde{u}} \tilde{v} \right)
    = \int_{\Schw'} \cj{\tilde{u}} \tilde{v} \,d\prob.
  \end{equation}
\end{theorem}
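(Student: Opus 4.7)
The plan is to realize the claim as a special case of the Bochner--Minlos construction applied to the Gel'fand triple $\Schw \ci \HE \ci \Schw'$ already set up in the excerpt. The key input is that, for either of the two choices of seminorms described, the inclusion $\Schw \hookrightarrow \HE$ is a nuclear operator (this is the reason for the factor $n^p$ in \eqref{eqn:p-seminorm} and for iterating $\LapS^p$ in construction (2)). Once nuclearity is in hand, Minlos' theorem produces a unique Borel probability measure \prob on $\Schw'$ whose characteristic functional is
\begin{equation*}
  \int_{\Schw'} e^{\ii \la u,\gx\ra_\Wiener}\,d\prob(\gx) = e^{-\frac12 \|u\|_\energy^2}, \qq u \in \Schw,
\end{equation*}
where $\la u,\gx\ra_\Wiener$ denotes the dual pairing between \Schw and $\Schw'$. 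This immediately identifies \prob as a centered Gaussian measure with covariance $\energy$.

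With \prob constructed, I would first define $\sW$ on the dense subspace \Schw by $\sW u(\gx) := \la u,\gx\ra_\Wiener$, which is well-defined pointwise on $\Schw'$. The isometry property on \Schw then follows by differentiating the characteristic functional twice at $0$ (equivalently, by expanding $|e^{\ii t \la u,\gx\ra}|^2 = 1$ in $t$ and picking off the quadratic term), yielding
\begin{equation*}
  \int_{\Schw'} |\sW u(\gx)|^2 \,d\prob(\gx) = \|u\|_\energy^2.
\end{equation*}
Having $\sW$ isometric on the dense subspace \Schw, one extends uniquely to an isometry $\sW:\HE \to L^2(\Schw',\prob)$. The extended kernel $\{\tilde v_x\}_{x \in \Graph}$ consists of Gaussian random variables since their linear span is the $L^2$-closure of $\sW(\Schw)$, which is a Gaussian Hilbert space.

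The two concrete formulas then follow mechanically. For \eqref{eqn:R(x,y)-as-expectation}, apply the isometry to $v_x - v_y \in \HE$ and use Definition~\ref{def:R(x)} together with the reproducing identity \eqref{eqn:v_x}:
\begin{equation*}
  \Ex_\gx\bigl((\tilde v_x - \tilde v_y)^2\bigr)
  = \|v_x - v_y\|_\energy^2
  = \energy(v_x - v_y)
  = R^F(x,y).
\end{equation*}
For \eqref{eqn:Expectation-formula-for-energy-prod}, use polarization of the isometry identity. The main obstacle is the construction of \prob itself: one must verify that the chosen system of seminorms actually makes $\Schw \hookrightarrow \HE$ nuclear (so that Minlos applies) and that the resulting \prob is supported on $\Schw'$ rather than merely on some larger space; both points are handled in the companion paper \cite{bdG}, and the present theorem is essentially a repackaging of that construction.
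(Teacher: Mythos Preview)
The paper does not prove this theorem at all; it is quoted verbatim from the companion paper \cite[Thm.~5.2]{bdG} and simply invoked as background. Your sketch via Bochner--Minlos is the standard route and is essentially how the result is established in \cite{bdG}: one checks that $e^{-\frac12\|u\|_\energy^2}$ is positive definite and continuous in the Fr\'echet topology on \Schw, applies Minlos to obtain \prob on $\Schw'$, reads off the isometry from the covariance, and extends by density. Your identification of the only nontrivial step (nuclearity of $\Schw \hookrightarrow \HE$, handled in \cite{bdG}) is accurate, and the derivations of \eqref{eqn:R(x,y)-as-expectation} and \eqref{eqn:Expectation-formula-for-energy-prod} from the isometry are immediate as you say.
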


\begin{remark}\label{rem:HE-into-L2(S',prob)-gives-hermitian-multiplication}
  The Wiener transform gives a representation of the Hilbert space \HE as an $L^2$ space of functions on a probability ``sample space'' $(\Schw',\prob)$. This is useful in many ways. 
  \begin{enumerate}[(a)]
    \item While direct computation in \HE is typically difficult (when solving equations, for example), passing to the transform allows us instead to convert
geometric problems in \HE into manipulation of functions on $\Schw'$ or on a
subspace of it. 
  \item As we show in this section, problems involving bounded operators in \HE can
be subtle. The Wiener transform immediately offers a maximal abelian algebra
of bounded operators, viz., multiplication by $L^\iy$ functions on $\Schw'$.%
  \footnote{The multiplication operator on \HE ``before the transform'' (discussed in \S\ref{sec:Bounded-multiplication-operators}--\S\ref{sec:Algebras-of-multiplication-operators}) should not be confused with those in $L^2(\Schw',\prob)$ ``after the transform''.}
  \end{enumerate}
\end{remark}

\begin{defn}\label{defn:algE}
  Denote the collection of bounded functions of finite energy by
  \linenopax
  \begin{equation}
    \algE := \{u \in \HE \suth u \text{ is bounded}\}.
  \end{equation}
  Define multiplication on $\algE$ by the pointwise product
  \linenopax
  \begin{equation}
    (u_1 u_2)(x) := u_1(x) u_2(x),
  \end{equation}
  and a norm on $\algE$ by
  \linenopax
  \begin{equation}\label{eqn:algE-norm}
    \|u\|_\sA := \|u\|_\iy + \|u\|_\energy.
  \end{equation}
\end{defn}

\begin{lemma}\label{defn:algE-is-a-Banach-alg}
  $(\algE,\| \cdot \|_\sA)$ is a Banach algebra.
  \begin{proof}
    It is obvious that $u_1 u_2$ is bounded; one checks that $u_1 u_2 \in \dom \energy$ by directly computing,
    \linenopax
    \begin{align*}
      \|u_1 u_2\|_\energy^2
      &= \tfrac12 \sum_{x,y} \cond_{xy} |u_1 u_2(x) - u_1 u_2(y)|^2 \\
      &= \tfrac12 \sum_{x,y} \cond_{xy} |(u_1(x) - u_1(y)) u_2(x) + (u_2(x) - u_2(y)) u_1(x)|^2 \\
      &\leq \tfrac12 \sum_{x,y} \cond_{xy} (|(u_1(x) - u_1(y)| |u_2(x)| + |u_2(x) - u_2(y)| |u_1(x)|)^2 \\
      &= \tfrac12 \sum_{x,y} \cond_{xy} (|(u_1(x) - u_1(y)|^2 |u_2(x)|^2 + \tfrac12 \sum_{x,y} \cond_{xy} |u_2(x) - u_2(y)|^2 |u_1(x)|^2 \\
      &\hstr[10] + \tfrac12 \sum_{x,y} \cond_{xy} |u_1(x)| |u_2(x)| |u_1(x) - u_1(y)| |u_2(x) - u_2(y)| \\
      &\leq \|u_2^2\|_\iy \|u_1\|_\energy^2 +  2\|u_1\|_\iy \|u_2\|_\iy |\la u_1, u_2\ra_\energy| + \|u_1^2\|_\iy \|u_2\|_\energy^2,
    \end{align*}
    which is clearly finite. This estimate also implies that $(u,v) \mapsto \| uv \|_\sA$ is a closed linear functional on the product space $\algE \times \algE$. The closed graph theorem then implies that it is continuous, i.e.,
    \linenopax
    \begin{align*}
      \| uv \|_\sA \leq C \|u\|_\sA \|v\|_\sA,
      \q\text{for all } u, v \in \algE. 
    \end{align*}
    It is a standard argument that one can then find an equivalent norm for which the same inequality holds with $C=1$; see \cite{KadisonRingroseI}, for example.
  \end{proof}
\end{lemma}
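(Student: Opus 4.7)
The plan is to verify in turn that (i) $\algE$ is closed under pointwise multiplication, (ii) the norm $\|\cdot\|_\sA$ is (equivalent to) a submultiplicative norm, and (iii) the space is complete. The algebra structure (associativity, distributivity) is inherited from pointwise operations and needs no separate argument, and sublinearity of $\|\cdot\|_\sA$ is inherited from $\|\cdot\|_\iy$ and $\|\cdot\|_\energy$.

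For (i), the sup bound $\|u_1 u_2\|_\iy \leq \|u_1\|_\iy \|u_2\|_\iy$ is immediate (using representatives normalized by \eqref{eqn:o-convention}, so that $u_1u_2$ has the same normalization). For the finite-energy assertion, I would exploit the discrete product rule
\[
(u_1 u_2)(x) - (u_1 u_2)(y) = u_1(x)\bigl(u_2(x) - u_2(y)\bigr) + u_2(y)\bigl(u_1(x) - u_1(y)\bigr).
\]
Squaring, applying $|a+b|^2 \leq 2|a|^2 + 2|b|^2$, multiplying by $\cond_{xy}$ and summing over $x,y \in \Graph$ produces
\[
\energy(u_1 u_2) \leq 2\|u_2\|_\iy^2\, \energy(u_1) + 2\|u_1\|_\iy^2\, \energy(u_2),
\]
which is finite. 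Combining this with the $\ell^\iy$ bound yields an inequality of the form $\|u_1 u_2\|_\sA \leq C \|u_1\|_\sA \|u_2\|_\sA$ for an absolute constant $C$. For (ii), strict submultiplicativity is then obtained by rescaling to the equivalent norm $\|\cdot\|_\sA' := C\|\cdot\|_\sA$, a standard device.

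For (iii), suppose $\{u_n\}$ is $\|\cdot\|_\sA$-Cauchy; it is then Cauchy in $\|\cdot\|_\iy$ and $\|\cdot\|_\energy$ separately. Choosing representatives with $u_n(o)=0$, uniform completeness produces a bounded limit $\tilde u$ with $u_n \to \tilde u$ in sup norm, and completeness of \HE produces $u \in \HE$ with $u_n \to u$ in energy. By the reproducing kernel identity \eqref{eqn:v_x-}, $u_n(x) = \la v_x, u_n\ra_\energy \to \la v_x, u\ra_\energy = u(x)$ for each $x \in X$, so the two limits coincide pointwise; hence $u = \tilde u \in \algE$, and $\|u_n - u\|_\sA \to 0$.

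The main obstacle is step (i): one must rewrite the difference $(u_1u_2)(x) - (u_1u_2)(y)$ so that each summand pairs an $\ell^\iy$ factor with an energy-type difference, without introducing cross terms which cannot be absorbed into $\|u_i\|_\sA$. The product-rule splitting above is the cleanest choice; alternatives (e.g.\ symmetric averaging) work but require additional bookkeeping. A secondary subtlety in (iii) is that $\|\cdot\|_\iy$- and $\|\cdot\|_\energy$-convergence a priori produce different notions of limit, and these are reconciled only via the continuity of the evaluation functionals $u \mapsto u(x)$ supplied by the reproducing kernel.
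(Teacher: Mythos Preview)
Your argument is correct and rests on the same device as the paper's: the discrete product rule for $(u_1u_2)(x)-(u_1u_2)(y)$, followed by pairing each summand's $\ell^\infty$ factor with an energy difference. The execution differs in two respects. First, you bound the square via $|a+b|^2\le 2|a|^2+2|b|^2$ and read off $\|u_1u_2\|_\sA\le C\|u_1\|_\sA\|u_2\|_\sA$ directly, whereas the paper expands the square in full (retaining the cross term as $2\|u_1\|_\iy\|u_2\|_\iy|\la u_1,u_2\ra_\energy|$) and then appeals to the closed graph theorem to obtain continuity of the product---an appeal that is not really needed once one has the explicit estimate. Second, you supply a completeness argument, reconciling the uniform and energy limits via the reproducing kernel identity \eqref{eqn:v_x-}; the paper's proof omits this step entirely. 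Your version is thus both more elementary and more complete.
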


\begin{defn}\label{defn:Gelfand-space}
  By the \emph{Gel'fand space} of a Banach algebra \sA, we mean the spectrum $\spec(\sA)$ realized as either the collection of maximal ideals of \sA or as the collection of multiplicative linear functionals on \sA. See \cite{Arveson:spectral-theory,Arveson:invitation-to-Cstar}.

  Let $\gz \in \spec(\algE)$ denote a multiplicative linear functional on $\algE$, so that $\ker \gz$ is a maximal ideal of $\algE$, and let $\gF_\sA: \algE \to C(\spec(\algE))$ denote the Gel'fand transform, so that $\gF_\sA(v)(\gz) := \gz(v)$. 
\end{defn}

There is a norm equivalent to the one given in \eqref{eqn:algE-norm} with respect to which \algE becomes a Banach algebra (see \cite{KadisonRingroseI}, e.g.), and we are concerned with the Gel'fand space of this one.

\begin{lemma}\label{defn:algE=C(spec(algE))}
  As a Banach algebra, $\algE$ is isometrically isomorphic to $C(\spec(\algE))$.
  \begin{proof}
    We need to show that $\ker \gF_\sA = 0$. This is equivalent to showing that $\algE$ is \emph{semisimple}, i.e., that the intersection of all the maximal ideals is $0$. It therefore suffices to show that an intersection of a subcollection of the maximal ideals is $0$. Let $L_x$ denote the multiplicative linear functional defined by $L_x u := u(x)$. Since $L_x u = \la u, v_x \ra_\energy$ under convention \eqref{eqn:o-convention}, and $\{v_x\}$ is dense in \HE (and therefore total), it follows that $\bigcap \ker L_x = 0$.
  \end{proof}
\end{lemma}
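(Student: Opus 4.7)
The plan is to invoke the Gel'fand representation theorem for commutative unital Banach algebras, by which $\gF_\sA : \algE \to C(\spec(\algE))$ is automatically a norm-decreasing algebra homomorphism; what remains is to verify injectivity (equivalently, semisimplicity of $\algE$) and then upgrade to the isometric statement. So the first step is just to recognize that the assertion reduces, via the standard theorem, to the equality $\bigcap_{\gz \in \spec(\algE)} \ker \gz = 0$.

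Next, I would exhibit an explicit family of multiplicative linear functionals on $\algE$ coming from point evaluation. For each $x \in X$, let $L_x : \algE \to \bC$ be defined by $L_x u := u(x)$. Multiplicativity is immediate because the algebra operation in $\algE$ is pointwise, and linearity is obvious. Continuity follows from the reproducing identity \eqref{eqn:v_x-}: under convention \eqref{eqn:o-convention} one has
\linenopax
\begin{equation*}
   L_x u \;=\; u(x) \;=\; \la u, v_x\ra_\energy,
\end{equation*}
so $|L_x u| \leq \|v_x\|_\energy \|u\|_\energy \leq \|v_x\|_\energy \|u\|_\sA$. Thus each $\ker L_x$ is a maximal ideal of $\algE$, furnishing a (possibly proper) subset of $\spec(\algE)$.

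The semisimplicity step is then to check $\bigcap_{x \in X} \ker L_x = 0$. If $u \in \algE$ satisfies $L_x u = 0$ for every $x \in X$, then $\la u, v_x\ra_\energy = 0$ for all $x$; since $\spn\{v_x\}_{x \in X}$ is dense in $\HE$ (Definition~\ref{def:vx}), this forces $u = 0$ in $\HE$, and hence in $\algE$. Consequently $\bigcap_{\gz \in \spec(\algE)} \ker \gz \subseteq \bigcap_x \ker L_x = 0$, and $\gF_\sA$ is injective.

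The main obstacle is the \emph{isometric} part of the conclusion: injectivity of the Gel'fand transform of a commutative semisimple Banach algebra is generally not enough to make it an isometry. To bridge this, I would invoke the preceding remark about replacing $\|\cdot\|_\sA$ by an equivalent Banach-algebra norm, and note that on bounded functions the natural candidate is the sup norm $\|u\|_\iy$, which satisfies the $C^\ad$-identity $\|\cj{u}u\|_\iy = \|u\|_\iy^2$ together with the involution $u \mapsto \cj{u}$. Under this norm $\algE$ is a commutative unital $C^\ad$-algebra, so the commutative Gel'fand--Naimark theorem then promotes the injective homomorphism $\gF_\sA$ to an isometric $\ad$-isomorphism onto $C(\spec(\algE))$, yielding the stated conclusion.
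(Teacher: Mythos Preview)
Your semisimplicity argument is exactly the paper's proof: point evaluations $L_x u = u(x) = \la u, v_x\ra_\energy$ are multiplicative linear functionals, and density of $\{v_x\}$ in \HE forces $\bigcap_x \ker L_x = 0$, hence $\ker \gF_\sA = 0$. The paper stops there.

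Where you diverge is in your final paragraph, where you try to upgrade injectivity to an isometry --- something the paper's proof does not address. Your proposed route, however, has a genuine gap: the sup norm $\|\cdot\|_\iy$ is \emph{not} equivalent to $\|\cdot\|_\sA = \|\cdot\|_\iy + \|\cdot\|_\energy$ on \algE. Equivalence would require a uniform bound $\|u\|_\energy \leq C\|u\|_\iy$ for all $u \in \algE$, and this fails already on $(\bZ,\one)$ (take truncated sawtooth functions with amplitude $1$ and arbitrarily many teeth). Relatedly, \algE is not complete in $\|\cdot\|_\iy$, since a uniform limit of bounded finite-energy functions need not have finite energy; so $(\algE,\|\cdot\|_\iy)$ is not a $C^\ad$-algebra and Gel'fand--Naimark does not apply. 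The ``equivalent norm'' alluded to in the paper just before the lemma is the standard rescaling that converts $\|uv\|\leq C\|u\|\|v\|$ into exact submultiplicativity; it is not the sup norm. So your attempt to justify the word ``isometrically'' does not go through, and the paper itself offers no argument for that part of the statement.
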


\begin{defn}\label{defn:algF}
  Recall from Definition~\ref{def:Fin} that $\spn\{\gd_x\}$ is the collection of functions of finite support; see also the first paragraph of \S\ref{sec:Bounded-multiplication-operators}. If we complete $\spn\{\gd_x\}$ in the sup norm, we obtain the collection of bounded functions on \Graph, and if we complete in \energy, we obtain \Fin. Therefore, the closure of $\spn\{\gd_x\}$ in the norm of \algE is 
  \linenopax
  \begin{align}\label{eqn:algF}
     \algF := \Fin \cap \algE.
  \end{align}
\end{defn}

\begin{lemma}\label{thm:algF-is-closed}
  \algF is a closed ideal in \algE.
  \begin{proof}
    Fix $x \in \Graph$ and let $\gd_x \in \algF$ be the characteristic function of $\{x\}$ as defined in Definition~\ref{def:Fin}. Take any finite set $F \ci \Graph$ and any linear combination $f = \sum_{x \in F} \gx_x \gd_x$. Since $v \cdot \gd_x = v(x) \gd_x$, one has $v \cdot f = \sum_{x \in F} \gx_x v(x) \gd_x$, which is clearly supported in $F$ again. This shows that the collection of all finitely supported functions on \Graph is an ideal. 
    
    Now for $f \in \algF$, take $\{f_n\}$ where each $f_n$ has finite support and $\|f-f_n\|_\sA \to 0$. This is possible in view of Definition~\ref{defn:algF}. Since $v \cdot f_n \in \spn\{\gd_x\}$ by the first part,
    \linenopax
    \begin{equation}
      \| (v \cdot f) - (v \cdot f_n)\|_\sA 
      = \| v \cdot (f - f_n)\|_\sA
      \leq \|v\|_\sA \|f-f_n\|_\sA \to 0,
    \end{equation}
    shows $v \cdot f \in \algF$ (by Definition~\ref{defn:algF} again). 
  \end{proof}
\end{lemma}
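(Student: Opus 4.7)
The plan is to obtain both properties of $\algF$ by exploiting its description in Definition~\ref{defn:algF} as the $\|\cdot\|_\sA$-closure of $\spn\{\gd_x\}$, together with the Banach algebra structure from Lemma~\ref{defn:algE-is-a-Banach-alg}.

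For closedness, I would simply note that $\algF = \Fin \cap \algE$ inherits the closure property from its two factors. If $\{f_n\} \ci \algF$ converges to some $f \in \algE$ in $\|\cdot\|_\sA$, then $\|f_n-f\|_\energy \to 0$ and $\|f_n-f\|_\iy \to 0$ separately, since each is dominated by $\|f_n-f\|_\sA$. The first gives $f \in \Fin$ because $\Fin$ is energy-closed; the second together with $f \in \algE$ keeps $f$ bounded. Hence $f \in \Fin \cap \algE = \algF$.

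For the ideal property, I would first treat finitely-supported functions and then pass to the limit. Fix $v \in \algE$ and $g \in \spn\{\gd_x\}$ with $\supp g \ci F$ for some finite $F \ci \Graph$. Then $(v \cdot g)(x) = v(x) g(x)$ vanishes off $F$, so $v \cdot g \in \spn\{\gd_x\} \ci \algF$. For general $f \in \algF$, use Definition~\ref{defn:algF} to pick finitely supported $\{g_n\}$ with $\|g_n - f\|_\sA \to 0$. By the submultiplicativity established in Lemma~\ref{defn:algE-is-a-Banach-alg},
\linenopax
\begin{align*}
  \|v \cdot f - v \cdot g_n\|_\sA
  = \|v \cdot (f-g_n)\|_\sA
  \leq \|v\|_\sA \|f-g_n\|_\sA \too 0,
\end{align*}
so $v \cdot f$ is the $\|\cdot\|_\sA$-limit of the sequence $\{v \cdot g_n\} \ci \algF$. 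Combined with the closedness proved above, this gives $v \cdot f \in \algF$.

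There is no real obstacle here; the only point that requires care is ensuring that the submultiplicative estimate from Lemma~\ref{defn:algE-is-a-Banach-alg} holds with constant $1$, which is precisely what the equivalent-norm remark at the end of that lemma's proof guarantees. Everything else is a routine two-step closure argument.
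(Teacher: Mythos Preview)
Your proof is correct and follows essentially the same route as the paper: both first check that $\spn\{\gd_x\}$ is an ideal via the identity $v\cdot\gd_x = v(x)\gd_x$, and then pass to the closure using the submultiplicativity of $\|\cdot\|_\sA$ from Lemma~\ref{defn:algE-is-a-Banach-alg}. The only difference is cosmetic: since $\algF$ is defined in Definition~\ref{defn:algF} as the $\|\cdot\|_\sA$-closure of $\spn\{\gd_x\}$, closedness is immediate and the paper does not argue it separately, whereas you give an explicit argument via the characterization $\algF = \Fin \cap \algE$.
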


\begin{defn}\label{defn:algH}
  Since $\algF$ is a closed ideal, it is standard that 
  \linenopax
  \begin{equation}
    \algH := \algE/\algF
  \end{equation}
  is an algebra, and in fact a Banach algebra under the usual norm
  \begin{equation}
    \| [u] \|_\Harm := \inf \{\|u+f\|_\sA \suth f \in \sB_{\Fin}\}.
  \end{equation}
\end{defn}

\begin{theorem}\label{thm:Gelfand-space-is-bdG}
  The Gel'fand space of \algH contains $\bd \Graph$, and there is an isometric embedding $\algH \hookrightarrow C(\bd \Graph)$.
  \begin{proof}
    Recall from \cite[Cor.~4.5]{bdG} that for $v_x \in \HE$, one defines $\tilde v_x \in L^2(\Schw',\prob)$ by
    \linenopax
    \begin{equation}
      \tilde v_x(\gx) = \la v_x, \gx\ra_\sG = \lim_{n \to \iy} \la v_{x,n}, \gx \ra_\energy, 
    \end{equation}
    where $\{v_{x,n}\}_{n \in \bN}$ is any sequence in \Schw converging to $v_x$, and that with this extension, harmonic functions in \HE have the boundary representation
    \linenopax
    \begin{equation}\label{eqn:integral-repn-of-h}
      h(x) = \int_{\Schw'} \tilde v_x(\gx)\dn{\tilde{h}}(\gx) \, d\prob(\gx) + h(o).
    \end{equation}
    (Full details on this notation may be found in \cite{bdG} or \cite{OTERN}.)
    It follows immediately from this representation that if $\tilde h = 0$ on $\bd \Graph$, then $h=0$ everywhere on \Graph. (Here, $\tilde h = 0$ on $\bd \Graph$ means $\lim_{n \to \iy} h(x_n)=k$ for some $k \in \bC$ and any sequence $\{x_n\}$ with $\lim_{n \to \iy} x_n = \iy$.)
    
    Given any $\gb \in \bd\Graph$, the evaluation $\gc_\gb(u) := \tilde u(\gb)$ defines a multiplicative linear functional on $\algH$, so that $\bd \Graph$ is contained in the Gel'fand space of \algH. 
  \end{proof}
\end{theorem}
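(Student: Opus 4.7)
The plan is to combine Royden's decomposition $\HE = \Fin \oplus \Harm$ (Theorem~\ref{thm:HE=Fin+Harm}) with the Wiener embedding into $L^2(\Schw',\prob)$ to realize $\algH$ as an algebra of boundary values. By Lemma~\ref{thm:Pfin-preserves-boundedness}, $\Pfin u$ is bounded whenever $u \in \algE$, so every coset in $\algH = \algE/\algF$ contains a canonical bounded harmonic representative $h_{[u]} := u - \Pfin u \in \Harm \cap \algE$. The map I would construct is $\gF : [u] \mapsto (\gb \mapsto \tilde u(\gb))$, where $\tilde u$ is the Wiener extension of $u$ to $\Schw'$.

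First, I would verify that $\gc_\gb([u]) := \tilde u(\gb)$ is a well-defined multiplicative linear functional on $\algH$ for every $\gb \in \bd\Graph$. Independence from the coset representative reduces to showing $\tilde f(\gb) = 0$ for all $f \in \algF$, and this follows from the integral representation \eqref{eqn:integral-repn-of-h}: the harmonic part of $f \in \algF \subseteq \Fin$ vanishes, so pushing the formula to the boundary gives $\tilde f(\gb) = 0$. Multiplicativity $\widetilde{u_1 u_2}(\gb) = \tilde u_1(\gb)\tilde u_2(\gb)$ would follow because the boundary evaluation is obtained as a limit along sequences $\{x_n\} \ci \Graph$ with $x_n \to \iy$, and pointwise multiplication on $\Graph$ passes through such limits; this needs to be justified using continuity of the Wiener extension along these sequences, as developed in \cite[Cor.~4.5]{bdG}. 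Injectivity of $\gb \mapsto \gc_\gb$ then rests on the fact that bounded harmonic functions separate points of $\bd\Graph$, which is essentially the uniqueness statement $\tilde h|_{\bd\Graph} = 0 \implies h \equiv 0$ extracted from \eqref{eqn:integral-repn-of-h} applied to dipoles $v_x$.

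Second, for the isometric embedding $\algH \hookrightarrow C(\bd\Graph)$, I would (after passing, if necessary, to the equivalent Banach algebra norm on $\algE$ noted following Lemma~\ref{defn:algE-is-a-Banach-alg}) show $\|\gF([u])\|_\iy = \|[u]\|_\algH$. Contractivity $\|\gF([u])\|_\iy \leq \|[u]\|_\algH$ is standard since each $\gc_\gb$ is a character. The reverse inequality is where the real work lies: using the harmonic representative $h_{[u]}$ and applying a boundary maximum principle extracted from \eqref{eqn:integral-repn-of-h}, I would argue that $\sup_{\gb \in \bd\Graph} |\tilde h_{[u]}(\gb)|$ dominates both $\|h_{[u]}\|_\iy$ on $\Graph$ and its contribution to the quotient energy norm. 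The main obstacle is precisely this last step, namely controlling the combined $\|\cdot\|_\iy + \|\cdot\|_\energy$ quotient norm by a pure sup norm on $\bd\Graph$. I expect this to require a careful analysis of the support properties of the Gaussian measure on $\Schw'$, and it is also the reason I anticipate having to pass to an equivalent Banach algebra norm on $\algE$ rather than prove isometry directly with the norm in \eqref{eqn:algE-norm}.
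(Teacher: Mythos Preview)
Your approach is essentially the paper's: use the Wiener embedding and the boundary integral representation \eqref{eqn:integral-repn-of-h} to define evaluation characters $\gc_\gb([u]) = \tilde u(\gb)$ on $\algH$, with injectivity coming from the implication $\tilde h|_{\bd\Graph}=0 \Rightarrow h\equiv 0$. You are in fact more thorough than the paper's own argument, which stops after exhibiting the characters and does not explicitly verify coset well-definedness or the isometry; your identification of the boundary maximum-principle step and the need to pass to an equivalent Banach-algebra norm is an addition rather than a deviation.
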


\begin{theorem}\label{thm:Harm=0-means-Gelfand-is-1pt}
  If $\Harm = 0$, then the Gel'fand space of \algE is $\Graph \cup \{\iy\}$.
  \begin{proof}
    %
    Let $\gc \in \spec(\algE)$ and apply it to both sides of $v \cdot \gd_x = v(x) \gd_x$ (the left side is a product in \algE and the right side is a scalar multiple of $\gd_x$) to obtain $\gc(v) \cdot \gc(\gd_x) = v(x) \gc(\gd_x)$, and hence
    \linenopax
    \begin{equation}\label{eqn:Gelf-chi-split}
      \gc(\gd_x) \cdot (\gc(v) - v(x)) = 0, \q \forall x \in \Graph, \forall v \in \algE.
    \end{equation}
    This implies (i) $\gc(\gd_x) = 0$ for all $x$, or else (ii) $\exists y \in \Graph$ for which $\gc(\gd_y) \neq 0$. Since $\Harm = 0$, Theorem~\ref{thm:HE=Fin+Harm} implies that \gc is determined by its action on $\{\gd_x\}_{x \in \Graph}$. Thus, only the zero functional satisfies $\gc(\gd_x) = 0$ for all $x \in \Graph$, and we may safely ignore case (i). For case (ii), it follows that $\gc(\gd_x)=0$ for all $x \neq y$, so $\gc(v) = v(y)$ by \eqref{eqn:Gelf-chi-split}. This shows that \gc corresponds to evaluation at the vertex $y$; note that the uniqueness of $y$ for which $\gc(\gd_y) \neq 0$ is implicit. 
    
    Observe that $C(\Graph)$ is not unital, because the constant function $\one \simeq 0$ in \HE. We unitalize \algE in the usual way: 
    \linenopax
    \begin{equation}
      \tilde{{\algE}} = \algE \times \bC
      \q\text{with}\q
      (a_1,\gl_1) (a_1,\gl_1) := (a_1 a_2 + \gl_2a_1 + \gl_1a_2, \gl_1 \gl_2).
    \end{equation}
    The unit in this new algebra is then $(0,1)$. By standard theory, this corresponds to taking the one-point compactification of \Graph. 
  \end{proof}
\end{theorem}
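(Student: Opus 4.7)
The plan is to classify every character $\gc \in \spec(\algE)$ by exploiting the algebraic identity $v \cdot \gd_x = v(x)\,\gd_x$, which holds in $\algE$ for all $v \in \algE$ and $x \in \Graph$. First I would show that each vertex $y \in \Graph$ provides a continuous multiplicative linear functional $L_y(u) := u(y)$. Under the convention $u(o) = 0$, continuity follows from the reproducing kernel estimate $|u(y)| = |\la v_y, u\ra_\energy| \leq \|v_y\|_\energy\,\|u\|_\sA$, and multiplicativity is immediate from the pointwise product on $\algE$.

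Next, for an arbitrary $\gc \in \spec(\algE)$, apply $\gc$ to $v\gd_x = v(x)\gd_x$ to obtain the dichotomy
\[
\gc(\gd_x)\,\bigl(\gc(v) - v(x)\bigr) = 0, \qquad \forall v \in \algE,\ \forall x \in \Graph.
\]
Case (A): some $\gc(\gd_y) \neq 0$. Then $\gc(v) = v(y) = L_y(v)$ for every $v \in \algE$, and $y$ is uniquely determined since $\gd_y \gd_z = 0$ whenever $z \neq y$. Case (B): $\gc(\gd_x) = 0$ for every $x \in \Graph$. Here the hypothesis $\Harm = 0$ enters through the Royden decomposition (Theorem~\ref{thm:HE=Fin+Harm}): since $\HE = \Fin$, we have $\algF = \Fin \cap \algE = \algE$, so $\spn\{\gd_x\}$ is $\sA$-dense in $\algE$ by Definition~\ref{defn:algF}. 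Continuity of $\gc$ in the algebra norm then forces $\gc \equiv 0$ on $\algE$.

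Because $\algE$ is non-unital ($\one$ represents $0$ in $\HE$), the zero character of Case (B) is not itself in $\spec(\algE)$. Passing to the unitization $\tilde\algE = \algE \times \bC$ remedies this: the Case (B) character appears uniquely as $(a,\gl) \mapsto \gl$, which I would identify with the ``point at infinity.'' Combining with Case (A), $\spec(\tilde\algE)$ consists of the point evaluations $\{L_y : y \in \Graph\}$ together with $\iy$, giving the one-point compactification $\Graph \cup \{\iy\}$.

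The main obstacle I anticipate is rigorously justifying the $\sA$-density of $\spn\{\gd_x\}$ in $\algE$ under $\Harm = 0$. The paper's definition of $\algF$ packages this, but unpacking it requires approximating a bounded finite-energy $v$ by finitely supported $v_n$ so that both $\|v - v_n\|_\iy \to 0$ and $\energy(v - v_n) \to 0$ hold simultaneously. The boundedness built into $\algE$ is essential here, and Lemma~\ref{thm:Pfin-preserves-boundedness} (which preserves boundedness under the projection $\Pfin$) is the natural tool for constructing such simultaneous approximations.
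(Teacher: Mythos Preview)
Your proposal is correct and follows essentially the same route as the paper: exploit the identity $v\gd_x = v(x)\gd_x$ to obtain the dichotomy, use $\Harm=0$ via the Royden decomposition to dispose of the case where $\gc$ kills every $\gd_x$, and then unitalize to pick up the point at infinity. Your write-up is in fact slightly more complete than the paper's, since you explicitly verify that each $L_y$ is a continuous character and give the orthogonality argument $\gd_y\gd_z=0$ for uniqueness of $y$; the paper leaves these implicit. The ``obstacle'' you flag about $\sA$-density of $\spn\{\gd_x\}$ is exactly what the paper packages into Definition~\ref{defn:algF} without further justification, so you are not missing anything the paper supplies; note also that when $\Harm=0$ the projection $\Pfin$ is the identity, so Lemma~\ref{thm:Pfin-preserves-boundedness} is not actually needed here.
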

Roughly speaking, taking the one-point compactification of \Graph corresponds to conjoining the single multiplicative linear functional ``evaluation at \iy'' to \algE. It is known from \cite{Ancona} that when $\Harm=0$, $u(x)$ tends to a common value along \prob-a.e. path to \iy, for any $u \in \HE$. 

\begin{conj}
  We conjecture that the converses of Theorem~\ref{thm:Gelfand-space-is-bdG} and of Theorem~\ref{thm:Harm=0-means-Gelfand-is-1pt} both hold. In other words, we expect that $\algH \cong C(\bd \Graph)$, and that if $\Harm \neq 0$, then the Gel'fand space of \algE contains at least two elements that don't correspond to any vertex of \Graph.
\end{conj}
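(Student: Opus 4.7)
My plan is to treat each half of the conjecture by leveraging the Wiener embedding of Theorem~\ref{thm:HE-isom-to-L2(S',P)} together with the boundary representation \eqref{eqn:integral-repn-of-h}, which are the principal tools available for moving between \HE and $\bd \Graph$.

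For the first half, $\algH \cong C(\bd \Graph)$, the isometric inclusion $\algH \hookrightarrow C(\bd \Graph)$ is already produced in the proof of Theorem~\ref{thm:Gelfand-space-is-bdG}; upgrading it to an isomorphism is a natural Stone--Weierstrass task. The image is a unital subalgebra closed under complex conjugation, both properties descending from the corresponding properties of $\algE$ modulo the ideal \algF (using Lemma~\ref{thm:algF-is-closed}), so the whole argument reduces to proving point separation on $\bd \Graph$. The natural separating family is $\{\widetilde{h_x}\}_{x \in \Graph}$, where $h_x \in \Harm$ is the harmonic part of the energy-kernel element $v_x$ under the Royden decomposition (Theorem~\ref{thm:HE=Fin+Harm}); each $h_x$ lies in $\algE$ by Lemma~\ref{thm:monopoles-and-dipoles-are-bounded} and Lemma~\ref{thm:Pfin-preserves-boundedness}, and represents $v_x$ modulo \algF. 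By \eqref{eqn:integral-repn-of-h}, $\widetilde{h_x}$ encodes the harmonic measure based at $x$, and one expects this family to separate any two distinct boundary points in a Martin-type sense.

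For the second half, suppose $h \in \Harm$ is nonzero. Since $\one = 0$ in \HE, $h$ is nonconstant on \Graph, so $\tilde h$ is nonconstant as an element of $L^2(\Schw',\prob)$, and the boundary representation \eqref{eqn:integral-repn-of-h} forces the nonconstancy to be carried by its values on $\bd \Graph$. Pick $\gb_1 \neq \gb_2 \in \bd \Graph$ with $\tilde h(\gb_1) \neq \tilde h(\gb_2)$; the evaluation functionals $\gc_{\gb_i}(u) := \tilde u(\gb_i)$ used in the proof of Theorem~\ref{thm:Gelfand-space-is-bdG} are multiplicative on $\algE$ and distinct. Neither coincides with evaluation at any $y \in \Graph$: if $\gc_{\gb_i}$ were such an evaluation, the dichotomy \eqref{eqn:Gelf-chi-split} would force $\gc_{\gb_i}$ to be determined by its values on $\{\gd_x\}_{x \in \Graph}$, but point masses vanish along \prob-almost every path tending to $\bd \Graph$, incompatible with $\tilde h(\gb_i)$ being the boundary value of a nonzero harmonic function.

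The main obstacle in both halves is controlling how the Wiener extension behaves at $\bd \Graph$ itself, as opposed to on all of $\Schw'$. Stone--Weierstrass will require that every multiplicative linear functional on \algH actually arises from a boundary point, i.e., that the embedding $\bd \Graph \hookrightarrow \spec(\algH)$ is surjective; this is essentially a minimality statement for the boundary, in the spirit of \cite{Ancona}. The second half similarly rests on transferring the nonconstancy of $h$ on \Graph to genuinely distinct boundary points, which is the content of the Ancona-type path-limit theorem mentioned after Theorem~\ref{thm:Harm=0-means-Gelfand-is-1pt}. Both directions thus reduce, at the level of the deepest input, to a Martin/Ancona-style boundary regularity theorem for bounded finite-energy harmonic functions on the network, and it is precisely this step that I expect to be hardest to establish in full generality.
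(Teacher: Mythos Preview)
The statement you are addressing is a \emph{conjecture} in the paper: the authors state it as open and give no proof whatsoever. There is consequently no argument in the paper against which to compare your proposal.

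Your write-up is itself a plan rather than a proof, and you are candid about this: you correctly isolate the hard step in both halves as a Martin/Ancona-type boundary regularity result that is not supplied by the tools available in the paper. Two technical points worth flagging if you pursue the outline. First, for the second half you tacitly need $h \in \algE$, i.e., $h$ bounded; the hypothesis $\Harm \neq 0$ does not by itself furnish a \emph{bounded} nonconstant harmonic function of finite energy, so an extra step is required there before the evaluation functionals $\gc_{\gb_i}$ can be separated using $h$. Second, for the first half, Stone--Weierstrass would indeed give density of the image in $C(\bd\Graph)$ (hence surjectivity, since an isometric image is closed), but $\algH$ carries the quotient of the norm $\|\cdot\|_\iy + \|\cdot\|_\energy$, not a $C^\ad$-norm; the word ``isometric'' in the statement of Theorem~\ref{thm:Gelfand-space-is-bdG} already presupposes something nontrivial about how the two norms relate on the boundary, and this is a wrinkle you inherit rather than introduce.
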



\section{Examples}
\label{sec:Examples}

\begin{ex}
  The following example shows that even though $v_x$ is a bounded function on any network (Lemma~\ref{thm:monopoles-and-dipoles-are-bounded}), the corresponding multiplication operator may not be bounded. This highlights the disparity between $C^\ad(\HE)$ from Definition~\ref{def:multiplier-C*algebra} and \algE from Definition~\ref{defn:algE}. 
  
  \begin{figure}
    \scalebox{0.80}{\includegraphics{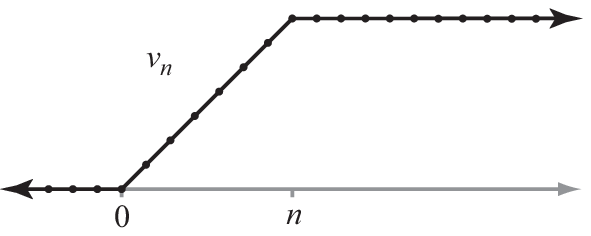}}
    \centering
    \caption{\captionsize The energy kernel element $v_n$ on the integer network $(\bZ,\one)$.}    
    \label{fig:vx-in-Z1}
  \end{figure}
  
  Consider the integer network with unit conductances $(\bZ,\one)$: 
  \linenopax
  \begin{align*}
    \xymatrix@W=-2pt{
      \dots \ar@{-}[r]^1
      & \vertexdot{-2} \ar@{-}[rr]^1
      && \vertexdot{-1} \ar@{-}[rr]^1
      && \vertexdot{0} \ar@{-}[rr]^1
      && \vertexdot{1} \ar@{-}[rr]^1
      && \vertexdot{2} \ar@{-}[rr]^1
      && \vertexdot{3} \ar@{-}[r]^1
      & \dots
    }
  \end{align*} 
  We label the vertex $x_n$ by `$n$' to simplify notation. Then if \eqref{eqn:boundedness-sufficiency} held, Corollary~\ref{thm:convergence-of-M_f} would give
  \linenopax
  \begin{align*}
    M_{v_n}
    &= |\gd_1\ra \la v_1| + 2|\gd_2\ra \la v_2| + \dots 
      + n|\gd_n\ra \la v_n| + n|\gd_{n+1}\ra \la v_{n+1}| + \dots,
  \end{align*}
  for each fixed $n$. The operator norm corresponding to one of these terms is
  \linenopax
  \begin{align*}
    \left\| n \vstr[2.2]|\gd_{n+k}\ra \la v_{n+k}|\right\| 
    = n\|\gd_{n+k}\|_\energy \|v_{n+k}\|_\energy
    = n \sqrt2 \sqrt{n+k} 
    \limas{k} \iy,
  \end{align*}
  so clearly \eqref{eqn:boundedness-sufficiency} cannot hold. 
  
  Checking Theorem~\ref{thm:boundedness-criterion} directly is harder; one must compute
  \linenopax
  \begin{align*}
   \|M_{v_n}\|_{\HE \to \HE}
    = \sup_F \|V_F^{1/2} D_F V_F^{-1/2}\|_{\ell^2 \to \ell^2},
  \end{align*}
  where the latter is the operator norm on $\ell^2(F)$ and $F$ ranges over all finite subsets of $X$. For our purposes, it will suffice to consider sets $F$ of the form $F=\{1,2,\dots,n\}$. The matrix for $V_F$ is then
  \linenopax
  \begin{align*}
    \left[\begin{array}{ccccccccc}
    1 & 1 & 1 & 1 & \dots \\
    1 & 2 & 2 & 2 & \dots \\
    1 & 2 & 3 & 3 & \dots \\
    1 & 2 & 3 & 4 & \dots \\
    \vdots & \vdots & \vdots & \vdots & \ddots & \vdots & \vdots & \vdots \\
    &&&& \dots & n-2 & n-2 & n-2 \\
    &&&& \dots & n-2 & n-1 & n-1 \\
    &&&& \dots & n-2 & n-1 & n \\
    \end{array}\right],\\
  \end{align*}
  but $V^{-1/2}$ is a complicated even for small $F$. For example, for $V_F = \left[\begin{smallmatrix} 1 & 1 & 1 \\ 1 & 2 & 2 \\ 1 & 2 & 3 \end{smallmatrix}\right]$, one has
  \scalebox{0.80}{\begin{minipage}{5in}
  \begin{align*}
    \hstr[6]
    \scalebox{1.25}{$V_F^{1/2} = $}
    \left[\begin{array}{ccc}
 \llbracket -8-28 \gg+49 \gg^3, 3\rrbracket  & \llbracket 8-28 \gg+49 \gg^3, 2\rrbracket  & \llbracket 1+7 \gg-49 \gg^2+49 \gg^3, 2\rrbracket  \\
 \llbracket 8-28 \gg+49 \gg^3, 2\rrbracket  & \llbracket 13-21 \gg-49 \gg^2+49 \gg^3, 3\rrbracket  & 1+\llbracket -8+98 \gg^2+49 \gg^3, 2\rrbracket  \\
 \llbracket 1+7 \gg-49 \gg^2+49 \gg^3, 2\rrbracket  & \llbracket 41-49 \gg-49 \gg^2+49 \gg^3, 2\rrbracket  & \llbracket 97-105 \gg-49 \gg^2+49 \gg^3, 3\rrbracket 
\end{array}\right],
  \end{align*}
  \end{minipage}}\\
  where $\llbracket p(\gg), k\rrbracket$ is the root of the polynomial $p(\gg)$ closest to the number $k$.
\end{ex}

\subsection*{Acknowledgements}

The authors are grateful for stimulating comments, helpful advice, and valuable references from Daniel Alpay, Ilwoo Cho, Keri Kornelson, Paul Muhly, Anna Paolucci, Myung-Sin Song, Karen Shuman, and others. We also thank the students and colleagues who have endured our talks on this material and raised fruitful questions. The work of the first named author was completed while visiting at the Ben Gurion University (Israel), and PJ is grateful to Daniel Alpay and BGU for support and hospitality.

\bibliographystyle{alpha}
\bibliography{networks}

\pgap[0.5cm]

\end{document}